\theoremstyle{plain}
\newtheorem{theorem}{Theorem}[section]
\newtheorem{lemma}[theorem]{Lemma}
\newtheorem{proposition}[theorem]{Proposition}
\newtheorem{corollary}[theorem]{Corollary}
\newtheorem{remark}[theorem]{Remark}
\newtheorem{example}[theorem]{Example}
\newtheorem{remark-question}[section]{Remark-Question}
\newcommand{\Gtwo}{\mathrm{G}_2}
\newcommand\R{{\mathbb R}}
\newcommand\frg{{\mathfrak g}}
\newcommand\frn{{\mathfrak n}}
\newcommand\frs{{\mathfrak s}}
\definecolor{fondo}{rgb}{0.93,0.93,0.93}
\definecolor{m}{rgb}{0.9,0,0.9}
\renewcommand*{\eqref}[1]{%
  \hyperref[{#1}]{\textup{\tagform@{\ref*{#1}}}}%
}
\begin{document}
\title[Solutions of the Laplacian flow and coflow of a LCP $\mathrm{G}_2$-structure]{Solutions of the Laplacian flow and coflow of a Locally Conformal Parallel $\mathrm{G}_2$-structure}

 \author{Victor Manero}
 \address[V. Manero]{Departamento de Matem\'aticas\,-\,I.U.M.A.\\
 Universidad de Zaragoza\\
 Facultad de Ciencias Humanas y de la Educaci\'on\\
 22003 Huesca, Spain}
 \email{vmanero@unizar.es}

 \author{Antonio Otal}
 \address[A. Otal and R. Villacampa]{Centro Universitario de la Defensa\,-\,I.U.M.A., Academia General
 Mili\-tar, Crta. de Huesca s/n. 50090 Zaragoza, Spain}
 \email{aotal@unizar.es}
 \email{raquelvg@unizar.es}

 \author{Raquel Villacampa}

\date{\today}

\maketitle

\begin{abstract}
We study the Laplacian flow of a $\mathrm{G}_2$-structure where this latter structure is claimed to be Locally Conformal Parallel. The first examples of long time solutions of this flow with the Locally Conformal Parallel condition are given. All of the solutions are ancient and Laplacian soliton of shrinking type. These examples are
one-parameter families of Locally Conformal Parallel $\mathrm{G}_2$-structures on rank-one solvable extensions of six-dimensional nilpotent Lie groups.  The  found solutions are used to construct  long time  solutions to the Laplacian coflow starting from a Locally Conformal Parallel structure.
We also study the behavior of the curvature of the solutions obtaining that for one of the examples the induced metric is Einstein along all the flow (resp. coflow). 
\end{abstract}

\setcounter{tocdepth}{3} \tableofcontents

\bigskip



\begin{section}*{Introduction}

A $\mathrm{G}_2$-structure on a $7$-dimensional smooth manifold $M$ is a reduction  to the exceptional Lie group $\mathrm{G}_2$ of the structure group $\textrm{GL}(7,\R)$
of the frame bundle of $M$. We call {\em $\mathrm{G}_2$-manifold} a 7-dimensional manifold endowed with a $\mathrm{G}_2$-structure. The presence of a $\mathrm{G}_2$-structure is equivalent to the existence
of a globally defined 3-form $\varphi$,
which is called  the  \emph{$\mathrm{G}_2$ form}  or the \emph{fundamental  $3$-form}
and it can be described locally as
\begin{equation}\label{sigma}
 \varphi=e^{127}+e^{347}+e^{567}+e^{135}-e^{146}-e^{236}-e^{245},
\end{equation}
with respect to some local basis $\{e^1,\dots, e^7\}$ of the $1$-forms on $M$, which we call an \emph{adapted basis}.
The notation $e^{i_1\dots i_k}$ stands for $e^{i_1}\wedge\dots\wedge e^{i_k}$. The fundamental 3-form $\varphi$ is stable
in the sense that its orbit at each point $p\in M$ under the natural action of the group $\textrm{GL}(T_pM)$ is open (see \cite{Hi}).

The existence of a $\mathrm{G}_2$ form $\varphi$ on a manifold $M$ induces a Riemannian metric $g_{\varphi}$ and a volume element $vol_\varphi$
on $M$  related by the formula:
\begin{equation}\label{metric}
g_{\varphi} (X,Y) vol_\varphi=\frac 16 \iota_X\varphi \wedge \iota_Y\varphi \wedge \varphi,
\end{equation}
for any vector fields $X, Y$ on $M$.

If $\nabla$ denotes the Levi-Civita connection with respect to the induced metric $g_\varphi$, Fern\'andez and Gray~\cite{FernandezGray} defined 
many different $\mathrm{G}_2$-structures in terms of the intrinsic torsion of the $\mathrm{G}_2$-structure given by $\nabla\varphi$. Moreover, it is proved that the intrinsic torsion is
completely determined by the exterior derivative
of the $\mathrm{G}_2$ form $\varphi$ and the 4-form $\ast\varphi$, where $\ast$ denotes the Hodge star operator induced by the metric and the volume form \eqref{metric}.
The most restrictive class of $\mathrm{G}_2$-structures is the one contaning the so called parallel  $\mathrm{G}_2$-structures, which are covariantly constant with respect to $\nabla$.  Manifolds endowed with such structure are characterized by the condition that both $\varphi$ and $\ast\varphi$ are
closed. Equivalently, the $\mathrm{G}_2$-form $\varphi$ and the Riemannian holonomy group of
the underlying metric $g_\varphi$ is a subgroup of $\mathrm{G}_2$ being in addition Ricci-flat~\cite{Br-0}. 

The development of flows in Riemannian geometry has been mainly motivated by the study of the Ricci flow.
The same techniques are also useful in the
study of flows involving other geometrical structures, like for example,
the K\"{a}hler Ricci flow. 

Given a closed (or calibrated in the terminology of Harvey and Lawson~\cite{HL}) $\mathrm{G}_2$-structure $\varphi_0$ on a manifold $M$, that is $d\varphi_0=0$,
Bryant introduced in
\cite{Br}
a natural flow, the so-called {\em Laplacian flow}, given by the initial value problem
$$
\left \{   \begin{array}{l} \frac{d}{dt} \varphi (t) = \Delta_t  \varphi(t),\\[3pt]
d\varphi(t)=0,\\
\varphi(0)=\varphi_0,
\end{array}
\right.
$$
where $\Delta_t$ is the Hodge
Laplacian operator of the metric $g_{\varphi(t)}$ determined by $\varphi(t)$. The short time existence and uniqueness of solution for the Laplacian flow of any closed $\mathrm{G}_2$-structure,
on a compact manifold~$M$, has been proved by Bryant and Xu in the unpublished paper \cite{Br-Xu}.
Also, long time existence and convergence of the Laplacian flow
starting near a torsion-free $\mathrm{G}_2$-structure was proved in  the unpublished paper \cite{Xu-Ye} whenever the torsion
of $\varphi$ is sufficiently small. In the last years, Lotay and Wei in \cite{LW1}, \cite{LW2} and \cite{LW3} have obtained many results concerning the properties of
the Laplacian flow.

In \cite{FFM} the first examples  of noncompact manifolds with long time existence of the solution for the
Laplacian flow of a closed $\mathrm{G}_2$-structure are shown. Those examples are nilpotent Lie groups admitting an invariant
closed $\mathrm{G}_2$-structure which determines the nilsoliton metric. Recently in \cite{FR} the authors studied the Laplacian flow of a closed $\mathrm{G}_2$-structure on warped products
of the form $M \times S^1$ where the base space is a 6-dimensional compact manifold endowed with an $\mathrm{SU}(3)$-structure. Impossing the warping function to be constant they find sufficient
conditions for the existence of solution of the Laplacian flow  and present some examples where $M$ is a six-dimensional solvmanifold.

Karigiannis, McKay and Tsui in \cite{KMT} introduced the \emph{Laplacian coflow} (or \emph{coflow} for short). In this case the initial $\mathrm{G}_2$-form $\varphi_0$ is claimed to
be coclosed (or cocalibrated as in~\cite{HL}),
i.e. $d\psi_0=0$,  where $\psi_0=\ast\varphi_0$.
The equations of this flow  are given by
$$
\left \{   \begin{array}{l} \frac{d}{dt} \psi (t) = - \Delta_t  \psi(t),\\[3pt]
d\psi(t)=0,\\
\psi(0)=\psi_0,
\end{array}
\right.
$$
with $\psi(t)=\ast_t \varphi(t)$ the Hodge dual 4-form of the $\mathrm{G}_2$-structure $\varphi(t)$ and $\Delta_t$ is the Hodge Laplacian
operator with respect to the metric $g_{\varphi(t)}$ induced by $\varphi(t)$. Unlike the Laplacian flow, up to now short time existence of solution of the coflow is not known.
Assuming short time existence and uniqueness of solution, it is shown in \cite{KMT} that the condition of the initial $\mathrm{G}_2$-form $\varphi_0$ to be coclosed (equiv. $\psi_0$ closed) is
preserved along the flow.

In \cite{G} Grigorian introduced a modified version of the Laplacian coflow which is called the \emph{modified Laplacian coflow} and proved short time existence and
uniqueness of solution for this modified flow. Recently in \cite{BFF} explicit solutions for the coflow and the modified Laplacian coflow have been described.
These solutions are one-parameter families of $\mathrm{G}_2$-structures defined on the 7-dimensional Heisenberg Lie group. The solutions for the coflow are always ancient, i.e., defined for all time $-\infty<t<T$, with $T<\infty$, for every initial cocalibrated $\mathrm{G}_2$-structure. The condition of the induced metric to be Ricci soliton is preserved along the coflow.
For overviews on these topics, see \cite{G2} and \cite{Lot}.

In this paper we are concerned with studying the Laplacian flow, resp. coflow, on Locally Conformal Parallel $\mathrm{G}_2$-structures, (LCP for short) as they play in some sense an intermediate role between closed and coclosed $\mathrm{G}_2$-structures.  LCP $\mathrm{G}_2$-structures are 
 characterized by the fact that at each point $p\in M$, there is some differentiable function $f$ defined on a local neighbourhood of $p$ such
that the underlying metric $g_\varphi$ can be modified locally to a metric $\widetilde g$ with holonomy a subgroup of $\mathrm{G}_2$ by means of
a conformal change $\widetilde g= e^{2f}g_\varphi$. Equivalently, the LCP condition  is given in terms of the exterior derivatives of $\varphi$ and $\ast\varphi$ by:
\begin{equation}\label{sigma-LCP}
\begin{aligned}
d\varphi &= 3 \, \tau \wedge \varphi, \quad d\ast \varphi &= 4 \, \tau \wedge \ast \, \varphi,
\end{aligned}
\end{equation}
with $\tau$ the Lee 1-form. These $\mathrm{G}_2$-structures are of type $\mathcal{X}_4$ in the sense of Fern\'andez-Gray, see \cite{FernandezGray}.


 In order to describe the first examples of solution of these flows we will consider the class of solvable Lie groups  described by Fino and Chiossi
 in~\cite{CF} constructed as rank-one solvable
extensions of nilpotent Lie groups admitting left-invariant Locally Conformal Parallel $\mathrm{G}_2$-structures.


The paper is structured as follows: in Section 1, we review some explicit examples on Lie groups solving the Laplacian flow and the
Laplacian coflow. This allows us to set a generic
ansatz for solving flows related with $\mathrm{G}_2$-structures  on Lie groups which will be useful in the rest of the paper.
Section 2 starts by introducing rank-one solvable extensions of nilpotent Lie groups.  We detailed in Proposition~\ref{algebrasFR} the list of Lie algebras
found in~\cite[Theorem 1]{CF} underlying the seven dimensional solvable Lie groups constructed in this way and admitting a left-invariant
LCP $\mathrm{G}_2$-structure. The rest of this section deals with exploring the Laplacian flow under the assumption of solutions defined in Section 1
either setting necessary and sufficient conditions preserving the LCP condition or describing the $\Delta_t\varphi(t)$ in a suitable form. 
Sections 3 and 4 are devoted to construct explicit examples of solutions to the Laplacian flow and coflow preserving the LCP-condition.  In Theorem 3.1 we present an explicit solution for the Laplacian flow 
where the LCP-condition is preserved, notice that the metric induced by the solution remains Einstein along the flow. The rest of Section 3 is devoted to obtain solutions for the remaining solvable Lie groups described by Chiossi and Fino. The solutions of the flow turn out to be Laplacian solitons.   In Section 4 we obtain relations between the sets of solutions of the Laplacian flow and coflow
where the LCP-condition is preserved (see Theorem~\ref{Thm:flujo-coflujo}).  Finally, in the Appendix we include the expressions of the curvature for the metric induced by the solutions of the Laplacian flow previously obtained.

\end{section}


\begin{section}{Laplacian flows on Lie groups}\label{sectLaplacian}

In the last years there has been a wide interest in finding solutions for the Laplacian flow and related notions have been explored such as new examples with extra properties.
In general, flows of $\Gtwo$-structures are of the form
\begin{equation}\label{Gen-flow}
\left \{   \begin{array}{l} \frac{d}{dt} \varphi (t) = \Delta_t  \varphi(t),\\[3pt]
\varphi(t)\in \mathcal C,
\end{array}
\right.
\end{equation}
where $\Delta_t$ denotes the corresponding Hodge Laplacian operator, $\mathcal C$ is a specific class of $\Gtwo$-structures and $t$ lives in an open real interval.  We will refer to it as \emph{$\mathcal C$-flow}.

The first author considering flows of $\mathrm{G}_2$-structures was Bryant in \cite{Br}. The objective of considering flows of $\mathrm{G}_2$-structures was to obtain examples of $\mathrm{G}_2$-manifolds without torsion as the result of certain evolution of other $\mathrm{G}_2$-structures with torsion. Thus, Bryant considered the so-called \emph{Laplacian flow} of a $\mathrm{G}_2$-structure $\varphi_0$ which is given by \eqref{Gen-flow} where $\varphi(t)$ is supposed to be closed.
 On compact manifolds short time existence and uniqueness of solution for the Laplacian flow of a
closed $\mathrm{G}_2$-structure has been proved by Bryant and Xu in \cite{Br-Xu}. Xu and Ye in \cite{Xu-Ye} proved long time existence and convergence
of solution of the Laplacian flow starting near a torsion-free $\mathrm{G}_2$-structure. In the last years Lotay and Wei in the series of papers \cite{LW3, LW2, LW1} have obtained important results concerning long time existence and convergence of solutions of the Laplacian flow.      

On the other hand, in \cite{KMT} Karigiannis, McKay and Tsui introduced the \emph{Laplacian coflow}. This latter flow can be considered as the analogue to the Laplacian flow
in which the fundamental 3-form is claimed to be coclosed instead of closed. Thus, this flow is given by the equations
\begin{equation}\label{co-F}
\left \{   \begin{array}{l} \frac{d}{dt} \psi (t) = - \Delta_t  \psi(t),\\[3pt]
\psi(0)=\psi_0,\\
d\psi(t)=0,
\end{array}
\right.
\end{equation}
with $\psi(t)=\ast_t\varphi(t)$ and $\ast_t$ denoting the Hodge star operator. As far as the authors know, short time existence and uniqueness of solution for this latter flow is not known. In \cite{G} Grigorian introduced a modified version of this flow called modified Laplacian coflow for which he proved short time existence and uniqueness of solution.

\subsection{Torsion of $\mathrm{G}_2$-structures} The torsion of a $\mathrm{G}_2$-structure can be identified with the covariant derivative of the fundamental form $\varphi$ with respect to the Levi-Civita connection of the induced metric.   As it is described in  \cite{FernandezGray}, it can be decomposed into four $\mathrm{G}_2$ irreducible components, namely $X_1, X_2, X_3$ and $X_4$.
Thus, a $\mathrm{G}_2$-structure is said to be of type $\mathcal{P}, \mathcal{X}_i, \mathcal{X}_i \oplus \mathcal{X}_j, \mathcal{X}_i \oplus \mathcal{X}_j \oplus \mathcal{X}_k $ or $\mathcal{X}$ if  $\nabla \varphi$ lies in $\{0\}, X_i, X_i \oplus X_j, X_i \oplus X_j \oplus X_k $ or $X=X_1 \oplus X_2 \oplus X_3 \oplus X_4$, respectively.
Hence, there exist 16 different classes of $\mathrm{G}_2$-structures. Some of the principal classes are summarized in Table~\ref{fig:1} and Figure \ref{fig:1}.

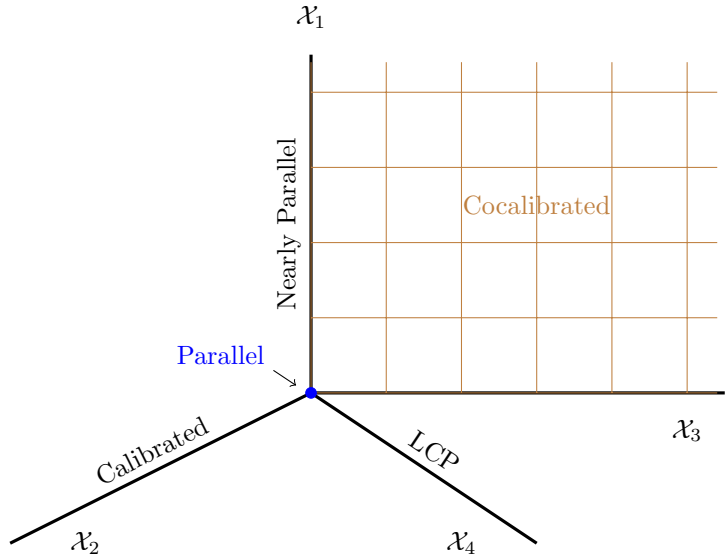
\begin{figure}[h]
\begin{center}
\begin{tikzpicture}[scale=1.0]
\draw [very thick][-] (0,0) -- (0.0cm,4.5cm) node[midway, sloped, above] {Nearly Parallel};
\draw [very thick](0,5) node {$\mathcal{X}_1$}; 
\draw [very thick][-] (0,0) -- (5.5cm,0.0cm) ;
\draw [very thick](5,-0.5) node {$\mathcal{X}_3$}; 
\draw [very thick][-] (0,0) -- (-4.0cm,-2.0cm)node[midway, sloped, above] {Calibrated} ;
\draw [very thick](-3.0,-2.0) node {$\mathcal{X}_2$}; 
\draw [very thick][-] (0,0) -- (3.0cm,-2.0cm)node[midway, sloped, above] {LCP} ;
\draw [very thick](2.0,-2.0) node {$\mathcal{X}_4$}; 
\draw [brown, very thick](3.0,2.5) node {Cocalibrated}; 
\draw [->]  (-0.5cm,0.3cm)--(-0.2,0.1) ;
 \draw[brown, step=1.0cm] (0,0) grid (5.4,4.4);
\filldraw [blue] (0,0) circle (2pt);
\draw [blue, very thick](-1.2,0.5) node {Parallel}; 
\end{tikzpicture}
\end{center}
\caption{Principal classes of $\mathrm{G}_2$-structures} \label{fig:1}
\end{figure}
Equivalently these classes of $\mathrm{G}_2$-structures can be characterized in terms of the exterior derivatives of $\varphi$ and $\ast\varphi$~\cite{FernandezGray}.

\begin{table}[ht]
\centering
\begin{tabular}{ccl}
\hline
\textbf{Class} & \textbf{Type} & \textbf{Exterior derivatives}\\  \hline
${\mathcal P}$          			          	& parallel 			                    & $d\varphi=0,\qquad\qquad d\ast\varphi=0$  	\\
${\mathcal X}_2$               			& calibrated (or closed)	  	      	    & $d\varphi=0$	\\
${\mathcal X}_4$               			& locally conformal parallel (LCP) & $d\varphi = 3 \, \tau \wedge \varphi, \quad d\ast \varphi = 4 \, \tau \wedge \ast \, \varphi$  	\\
${\mathcal X}_1 \oplus {\mathcal X}_3$     & cocalibrated   (or coclosed)     & $d\ast\varphi=0$ 		\\  \hline
\end{tabular}
\caption{Some classes of $\mathrm{G}_2$-structures.}\label{G2classes}
\end{table}

%

\begin{subsection}{Examples of solution}
The first examples of long time solutions for the Laplacian flow of closed $\mathrm{G}_2$-structures ($\mathcal{C}=\mathcal X_2$) were described in \cite{FFM} using nilpotent Lie groups endowed with a one parameter family of left-invariant closed $\mathrm{G}_2$-structures.

\begin{example} Consider the connected and simply connected Lie group $G$ whose underlying Lie algebra has the structure equations:
\begin{equation*}
de^{5}= e^1 \wedge e^2, \quad de^6= e^1 \wedge e^3, \text{ and } de^i=0 \text{ for all } i=1,2,3,4,7.
\end{equation*}
The family of closed $\mathrm{G}_2$ forms $\varphi(t)$ on $G$ given by
\begin{equation*}\label{solution:N2}
\varphi(t)=e^{147}+e^{267}+e^{357}+f(t)^{3}e^{123}+e^{156}+e^{245}-e^{346}, \qquad t\in \left (-\frac{3}{10},+ \infty \right),
\end{equation*}
where $f(t)$ is the positive function
\begin{equation*}
f(t)=\Big(\frac{10}{3} t +1\Big)^{\frac{1}{5}}.
\end{equation*}
is the solution of the Laplacian flow with initial value
\begin{equation*}
\varphi_0=e^{147}+e^{267}+e^{357}+e^{123}+e^{156}+e^{245}-e^{346}.
\end{equation*}
\end{example}




\medskip

 Analogously in \cite{BFF} have been given explicit long time solutions for the Laplacian coflow~\eqref{co-F}.
 \begin{example}
Consider the 7-dimensional Heisenberg Lie group $H_7$, whose
%
%
corresponding Lie algebra, namely $\mathfrak{h}_7$, is given by the structure equations
\begin{equation*}
de^{7}= \frac{\sqrt6}{6}(e^1 \wedge e^2 + e^3 \wedge e^4 + e^5 \wedge e^6), \text{ and } de^i=0 \text{ for all } i=1,\dots,6.
\end{equation*}
The solution of the Laplacian coflow on $H_7$ with the initial coclosed $\mathrm{G}_2$ form,
\begin{equation*}
\varphi_0=e^{127}+e^{347}+e^{567}+e^{135}-e^{146}-e^{236}-e^{245},
\end{equation*}
is given by
\begin{equation*}
\varphi(t) = \frac{1}{f(t)} (e^{127}+e^{347}+e^{567})+f(t)^3(e^{135}-e^{146}-e^{236}-e^{245}), \quad t \in \Big(-\infty, \frac35 \Big)
\end{equation*}
where $f(t)$
 is the positive function
 \begin{equation*}
f(t) =\Big( 1- \frac53 t\Big)^{\frac{1}{10}}.
 \end{equation*}
\end{example}

\bigskip

\end{subsection}

\begin{subsection}{Results on Lie groups}


Notice that the previous examples consist on solutions of the flows on Lie groups where a very concrete ansatz has been considered. In general, let $G$ be a simply connected solvable Lie group of dimension $7$ with Lie algebra $\mathfrak{g}$.
Let $\{e^1, \dots, e^7\}$ be a basis of the dual space ${\frg}^\ast$ of ${\frg}$, and
let $f_i=f_i(t)$ $(i=1,\dots,7)$ be some differentiable real functions depending on a parameter
$t\in I\subset{\mathbb{R}}$ such that $f_i(0)=1$ and $f_i(t)\neq0$, for any $t\in I$, where $I$ is a real open interval.
For each $t\in I$, we define the basis $\{x^1,\dots,x^7\}$ of ${\frg}^\ast$ by
$$
x^i=x^i(t)=f_{i}(t)e^i, \quad 1\leq i\leq 7.
$$
We consider the one-parameter family of left-invariant $\Gtwo$-structures $\varphi(t)$ on $G$ given by
\begin{align}
\begin{split}\label{eqn:expression_sol}
\varphi(t)&=x^{127}+x^{347}+x^{567}+x^{135}-x^{146}-x^{236}-x^{245}\\
&=f_{127}e^{127}+f_{347}e^{347}+f_{567}e^{567}+f_{135}e^{135}-f_{146}e^{146}-f_{236}e^{236}-f_{245}e^{245},
\end{split}
\end{align}
where $f_{ijk}=f_{ijk}(t)$ stands for the product  $f_{i}(t)f_{j}(t)f_{k}(t)$.
Now, following \cite{FMS} can be introduced the function $\varepsilon(i,j,k)$ on ordered indices $(i,j,k)$ as follows:
\[
\varepsilon(i,j,k)=\begin{cases}
1&\text{ if }(i,j,k)\in  A=\{(1,2,7),(1,3,5),(3,4,7),(5,6,7)\};\\
-1&\text{ if }(i,j,k)\in  B=\{(1,4,6), (2,3,6),(2,4,5)\};
\end{cases}
\]
Thus, the $\Gtwo$ form $\varphi$ defined in~\eqref{sigma}, can be expressed as
$\varphi=\sum_{(i,j,k)\in A\cup B} \varepsilon(i,j,k)e^{ijk}$, and
the family of $\Gtwo$ forms $\varphi(t)$ given by \eqref{eqn:expression_sol} becomes
$$
\varphi(t)=\sum_{(i,j,k)\in A\cup B} \varepsilon(i,j,k)x^{ijk}.
$$
Therefore,
\begin{align*}
\frac{d}{dt}\varphi(t)&=\sum_{(i,j,k)\in A\cup B}\varepsilon(i,j,k) {d f_{ijk}\over dt}e^{ijk}=\sum_{(i,j,k)\in A\cup B} \varepsilon(i,j,k)\frac{(f_{ijk})'}{f_{ijk}}x^{ijk}.
\end{align*}
Moreover, we express the 3-form $\Delta_t\varphi(t)$ as a linear combination of the basis of 3-forms $\{x^{abc}\}$ as
\begin{equation}\label{eq:delta3ijk}
\Delta_t \varphi(t)=\sum_{(i,j,k)\in A\cup B} \varepsilon(i,j,k)\Delta_{ijk}\,x^{ijk} + \sum_{1\leq i<j<k\leq 7, \, (i,j,k)\not\in A\cup B} \Delta_{ijk}\,x^{ijk}\,,
\end{equation}
where $\varepsilon(i,j,k)\Delta_{ijk}$ is the coefficient in $x^{ijk}$ of $\Delta_t\varphi(t)$ if $(i,j,k)\in A\cup B$ (i.e., if $\varepsilon(i,j,k)\neq0$), and
$\Delta_{ijk}$ is the coefficient in $x^{ijk}$ of $\Delta_t\varphi(t)$ if $(i,j,k)\not\in A\cup B$.
Consequently, the first equation of the $\mathcal C$-flow \eqref{Gen-flow} (regardless of condition $\mathcal C$) is equivalent to the system of differential equations
\begin{gather}\label{eq:evol_eq_expand1}
\begin{cases}
\Delta_{ijk}=\frac{(f_{ijk})'}{f_{ijk}} \qquad\,\quad \,\, \text{ if }\,  (i,j,k)\in A\cup B,\\
\Delta_{ijk}=0 \qquad\,\,\,\qquad  \,\, \text{ if }\,  1\leq i<j<k\leq 7\, \text{ and }\, (i,j,k)\not\in A\cup B.
\end{cases}
\end{gather}
%
The following lemma generalizes \cite[Lemma 1]{FMS} and states some properties involving the $\Delta_{ijk}$ coefficients.


\begin{lemma}\label{lem:equalities2}
Let $\varphi(t)$ be a family of left invariant $\Gtwo$-structures given by \eqref{eqn:expression_sol} on the Lie group $G$ solving the system \eqref{eq:evol_eq_expand1}.
 For ordered indices $(i,j,k)$ and $(p,q,r)\in A\cup B$ and  $\alpha, \beta\in \mathbb R$ we have
\begin{enumerate}[label=\roman*)]
\item if $\alpha\Delta_{ijk}=\beta\Delta_{pqr}$, then $(f_{ijk})^{\alpha}=(f_{pqr})^{\beta}$;\label{item:1}\\[-4pt]
\item if $\alpha f_{ijk}\Delta_{ijk}=\beta f_{pqr}\Delta_{pqr}$, then $\alpha(f_{ijk}-1)=\beta(f_{pqr}-1)$.\label{item:2}
\end{enumerate}
\begin{proof}
For $i)$ suppose $\alpha \Delta_{ijk}=\beta \Delta_{pqr}$.  In view of \eqref{eq:evol_eq_expand1}  this is equivalent to $\alpha \frac{(f_{ijk})'}{f_{ijk}}=\beta \frac{(f_{pqr})'}{f_{pqr}}$. Notice that the last expression can be stated as
$\alpha \frac{d}{dt}\ln(f_{ijk})=\beta \frac{d}{dt}\ln(f_{pqr})$. Therefore $\frac{d}{dt}\ln\big(\frac{(f_{ijk})^{\alpha}}{(f_{pqr})^{\beta}}\big)=0$. Hence  $\ln\big(\frac{(f_{ijk})^{\alpha}}{(f_{pqr})^{\beta}}\big)$ is constant and since $f_{l}(0)=1$ for all $l=1, \dots, 7$ we conclude that $(f_{ijk})^{\alpha}=(f_{pqr})^{\beta}$.  Part $ii)$ is immediate.
\end{proof}
\end{lemma}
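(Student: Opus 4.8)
The plan is to reduce everything to the elementary differential-equation content hidden in the system \eqref{eq:evol_eq_expand1}. The key observation is that for any ordered triple $(i,j,k)\in A\cup B$, the quantity $\Delta_{ijk}$ is by definition $\frac{(f_{ijk})'}{f_{ijk}}=\frac{d}{dt}\ln f_{ijk}$. So any linear relation among the $\Delta$'s with constant coefficients is a relation among logarithmic derivatives, which integrates to a relation among the $f$'s up to an additive constant, and that constant is pinned down by the initial condition $f_l(0)=1$ (hence $f_{ijk}(0)=1$ for every triple). This is exactly how part $i)$ is argued in the excerpt, and I would present part $i)$ in precisely that way: from $\alpha\Delta_{ijk}=\beta\Delta_{pqr}$ rewrite as $\alpha\frac{d}{dt}\ln f_{ijk}=\beta\frac{d}{dt}\ln f_{pqr}$, so $\frac{d}{dt}\big(\ln f_{ijk}^{\alpha}-\ln f_{pqr}^{\beta}\big)=0$, conclude $\ln\!\big(f_{ijk}^{\alpha}/f_{pqr}^{\beta}\big)$ is constant, and evaluate at $t=0$ where it is $\ln 1=0$, giving $(f_{ijk})^{\alpha}=(f_{pqr})^{\beta}$.

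For part $ii)$ the hypothesis is $\alpha f_{ijk}\Delta_{ijk}=\beta f_{pqr}\Delta_{pqr}$. Using $f_{ijk}\Delta_{ijk}=f_{ijk}\cdot\frac{(f_{ijk})'}{f_{ijk}}=(f_{ijk})'$, this says $\alpha(f_{ijk})'=\beta(f_{pqr})'$, i.e. $\frac{d}{dt}\big(\alpha f_{ijk}-\beta f_{pqr}\big)=0$, so $\alpha f_{ijk}-\beta f_{pqr}$ is constant in $t$. Evaluating at $t=0$ gives $\alpha f_{ijk}(0)-\beta f_{pqr}(0)=\alpha-\beta$, hence $\alpha f_{ijk}-\beta f_{pqr}=\alpha-\beta$ for all $t$, which rearranges to $\alpha(f_{ijk}-1)=\beta(f_{pqr}-1)$. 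This is the statement claimed, and it really is immediate once one notices the cancellation $f_{ijk}\Delta_{ijk}=(f_{ijk})'$; no genuine obstacle arises.

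There is essentially no hard step here: the only thing to be careful about is that both triples lie in $A\cup B$ so that the first (rather than the second) branch of \eqref{eq:evol_eq_expand1} applies, giving $\Delta_{ijk}=(f_{ijk})'/f_{ijk}$ with $f_{ijk}$ nonvanishing (so the logarithm and the division are legitimate on the interval $I$). I would state the two computations in the order $i)$ then $ii)$, matching the statement, and note in each case that the integration constant is evaluated using $f_l(0)=1$. If anything deserves a word of emphasis, it is simply the remark that $f_{ijk}(t)\neq 0$ on $I$ — which follows from each $f_i(t)\neq 0$ — so that dividing by $f_{ijk}$ and taking $\ln f_{ijk}$ cause no trouble; with that in hand both parts are one-line antiderivative arguments.
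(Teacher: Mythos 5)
Your proof is correct and follows essentially the same route as the paper: part i) is the identical logarithmic-derivative integration argument, and part ii) just fills in the cancellation $f_{ijk}\Delta_{ijk}=(f_{ijk})'$ that the paper dismisses as ``immediate.'' The only cosmetic point is that taking $\ln f_{ijk}$ needs positivity rather than mere nonvanishing of $f_{ijk}$, but this follows from continuity and $f_l(0)=1$, exactly as the paper implicitly uses.
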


Notice that flows on $\mathrm{G}_2$-structures whose fundamental form is claimed to be calibrated (belonging to class $\mathcal{C}=\mathcal{X}_2$) or
cocalibrated (in class $\mathcal{C}=\mathcal{X}_1 \oplus \mathcal{X}_3$) have been deeply studied. Thus in view of diagram 1 it seems natural to consider the remaining case, i.e. flows
where the fundamental form is required to be Locally Conformal Parallel ($\mathcal{C}=\mathcal{X}_4$). However, as far as the authors know, nothing has been done for flows of $\mathrm{G}_2$-structures where the LCP condition is required along the flow. Therefore in this paper we are concerned with studying the Laplacian flow, resp. coflow, of an LCP $\mathrm{G}_2$-structure on a manifold $M$, or simply the \emph{LCP-flow}, resp. \emph{LCP-coflow}, which can be defined as:

\begin{center}
    \noindent\begin{minipage}{0.4\textwidth}
\begin{equation}\label{LCP-flow-eq}
\begin{cases}
\dfrac{d}{dt} \varphi(t)=\Delta_{t} \varphi(t), \\[5pt]
\varphi(t)\in \mathcal X_4
\end{cases}
\end{equation}
    \end{minipage}%
    \begin{minipage}{0.1\textwidth}\centering
 \
    \end{minipage}%
    \begin{minipage}{0.4\textwidth}
\begin{equation}\label{LCP-coflow-eq}
\begin{cases}
\dfrac{d}{dt} \psi(t)=-\Delta_{t} \psi(t), \\[5pt]
\ast_t\psi(t)\in \mathcal X_4
\end{cases}
\end{equation}
    \end{minipage}\vskip1em
\end{center}
where a $\mathrm G_2$-structure $\varphi$ belongs to class $\mathcal X_4$ if it satisfies equation~\eqref{sigma-LCP}.
\end{subsection}
\end{section}

\section{Laplacian flow on  LCP rank-one solvable extensions of nilpotent Lie groups}
In this section we study the Laplacian flow on a specific set of Lie groups endowed with a left-invariant LCP $\mathrm{G}_2$-structure.  The associated Lie algebras of these groups are rank-one solvable extensions of 6-dimensional nilpotent Lie algebras.
These solvable extensions are constructed generically as follows (see \cite{Will}). Given a $n$-dimensional metric nilpotent Lie algebra
$(\frn,\langle\cdot,\cdot\rangle_\frn)$, its $(n+1)$-dimensional solvable extension is a
vector space $\frs=\frn\oplus\R e_{n+1}$ where $e_{n+1}\notin\frn$ endowed with a metric $\langle\cdot,\cdot\rangle_\frs$ which is fixed on $\frs$ extending the one on $\frn$ i.e $\langle \cdot, \cdot \rangle_{\frs_{|_\frn}}=\langle \cdot, \cdot \rangle_\frn$ and declaring that $\langle e_{n+1}, e_{n+1}\rangle_\frs=1$ and
$\langle e_{n+1}, \frn\rangle_\frs=0$. Now, given a derivation $D$ of the Lie algebra $\frn$, the Lie bracket $[\cdot,\cdot]_\frs$ on $\frs$ is
defined as $[X,Y]_\frs=[X,Y]_\frn$ and $[e_{n+1},Y]_\frs=D Y$ for every $X,\,Y\in\frn$, i.e., $\text{ad}_{e_{n+1}}|_\frn=D$.

Fino and Chiossi~\cite{CF} adapt the former construction when $\frn$ is a six-dimensional nilpotent Lie algebra endowed with an SU(3)-structure $(\omega,\psi_+)$
and $D$ is a derivation of $\frak n$ being non-singular, self-adjoint with respect to $\langle \cdot,\cdot \rangle_\frs$ and diagonalisable by an adapted
Hermitian basis $\{e_1,\ldots,e_6\}$ of $\frn$ (the latter condition being equivalent to $(DJ)^2=(JD)^2$). In this setting, the Maurer-Cartan equations for the seven-dimensional Lie algebra yield
\begin{equation*}
\left\{\begin{array}{lcl}
de^k&=&\eta_k\, e^k\wedge e^7+\hat de^k,\quad  1\leq k\leq 6,\\[4pt]
de^7&=&0,
\end{array}\right.
\end{equation*}
where the $\eta_k$ are the eigenvalues of the derivation $D$ and $\hat de^k=\sum_{1\leq i<j\leq 6}c_{ij}^ke^{ij}$ is
the exterior derivative at the 6-dimensional level $\frn$.

It turns out that there is a natural $\mathrm{G}_2$-structure on the
solvable Lie group $S=N\times\R$ corresponding to the $3$-form:
$$
\varphi=\omega\wedge e^7+\psi_+
$$
where $e^7$ denotes the 1-form $\langle e_7,\cdot\rangle_\frs$ and $N$ is the nilpotent Lie group associated to the nilpotent Lie algebra $\frak n$.  Indeed, they prove that when $(S,\varphi)$ is locally conformal parallel the SU(3)-structure $(\omega,\psi_+)$
is half-flat, that is, $d\omega^2=0$ and $d\psi_+=0$. More concretely, the list of Lie algebras underlying such locally conformal parallel structures in
this setting is contained in the following classifying result (in  a slightly different representation with respect to the original one found
inside the proof of \cite[Theorem 1]{CF}):
\begin{proposition}\label{algebrasFR}
Let $N$ be a nilpotent Lie group of dimension 6 endowed with an invariant SU(3)-structure $(\omega,\psi_+)$. Suppose that there is a
non-singular and self-adjoint derivation $D$ of the Lie algebra
$\frn$ such that $(DJ)^2=(JD)^2$. Then, on the solvable extension $\frs=\frn\oplus\R e_7$ with $\text{ad}_{e_7}=D$, the $\mathrm{G}_2$-structure
$$
\varphi=(e^{12}+e^{34}+e^{56})\wedge e^7+e^{135}-e^{146}-e^{236}-e^{245},
$$
is locally conformal parallel if and only if $\frs$ is isomorphic to one of the following list:
\begin{align*}
\mathfrak{cp}^m_1 = &(-me^{17}, -m e^{27}, -me^{37}, -m e^{47}, -me^{57}, -m e^{67}, 0 );\\
\mathfrak{cp}^m_2 = &\Big(-\frac43m e^{17}+\frac23me^{36}, -m e^{27}, -\frac23me^{37}, -m e^{47}, -me^{57}, -\frac23 m e^{67}, 0 	\Big);\\
\mathfrak{cp}^m_3 = &\Big(-\frac32m e^{17}+\frac12 m e^{36}+\frac12 m e^{45}, -m e^{27}, -\frac34me^{37}, -\frac34 m e^{47}, -\frac34me^{57}, -\frac34m e^{67}, 0 	\Big);\\
\mathfrak{cp}^m_4 = &\Big(-\frac75m e^{17}+\frac25 m e^{36}+\frac25 m e^{45}, -\frac65m e^{27}-\frac25m e^{46}, -\frac45me^{37},  
-\frac35 m e^{47},  -\frac45me^{57},-\frac35m e^{67}, 0 \Big);\\
\mathfrak{cp}^m_5 = &\Big(-\frac54 m e^{17}+\frac12 m e^{45}, -\frac54m e^{27}-\frac12m\, e^{46}, -me^{37}, -\frac12 m e^{47}, 
 -\frac34me^{57}, -\frac34m e^{67}, 0 	\Big);\\
\mathfrak{cp}^m_6 = &\Big(-\frac43m e^{17}+\frac13 m e^{36}+\frac13 m e^{45}, -\frac43m e^{27}+\frac13m e^{35}-\frac13 m e^{46}, -\frac23me^{37}, 
 -\frac23 m e^{47}, -\frac23me^{57}, -\frac23m e^{67}, 0 	\Big);\\
\mathfrak{cp}^m_7 = &\Big(-\frac65m e^{17}+\frac25 m e^{36}, -\frac35m e^{27}, -\frac35me^{37}, \frac25 m e^{26}-\frac65 m e^{47}, 
\frac25 m e^{23}-\frac65me^{57}, -\frac35m e^{67}, 0 	\Big).
\end{align*}
%
%
%
%
%
%
%
%
%
%
%
%
%
%
%
%
%
%

\end{proposition}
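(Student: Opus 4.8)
The plan is to reconstruct the classification of Chiossi–Fino in the specific normal form stated, treating it essentially as a bookkeeping task on top of their Theorem 1. First I would recall that by their result, once we assume $(\omega,\psi_+)$ is an $\mathrm{SU}(3)$-structure on the $6$-dimensional nilpotent $\frn$ and $D$ is a non-singular self-adjoint derivation with $(DJ)^2=(JD)^2$, the local conformal parallel condition \eqref{sigma-LCP} for $\varphi=\omega\wedge e^7+\psi_+$ forces the Lee form to be (a multiple of) $e^7$ and forces $(\omega,\psi_+)$ to be half-flat, i.e.\ $d\omega^2=0$ and $d\psi_+=0$. Combined with the Maurer–Cartan equations $de^k=\eta_k e^k\wedge e^7+\hat de^k$ for $1\le k\le 6$ and $de^7=0$, this yields a short list of admissible pairs $(\{\eta_k\},\{c^k_{ij}\})$, which is exactly what Chiossi–Fino extract. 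So the real content here is: (a) quote that list from \cite[Theorem 1]{CF}, and (b) verify that the seven algebras $\mathfrak{cp}^m_1,\dots,\mathfrak{cp}^m_7$ displayed above are precisely that list, rewritten in a uniform basis adapted to the $\mathrm{G}_2$-form \eqref{sigma} rather than to the $\mathrm{SU}(3)$-form.

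The key steps, in order, are as follows. First I would fix notation: $\varphi=(e^{12}+e^{34}+e^{56})\wedge e^7+e^{135}-e^{146}-e^{236}-e^{245}$, so that $\omega=e^{12}+e^{34}+e^{56}$, $\psi_+=e^{135}-e^{146}-e^{236}-e^{245}$, and $\ast\varphi=\frac12\omega^2+\psi_-\wedge e^7$ with $\psi_-=e^{136}+e^{145}+e^{235}-e^{246}$ (the sign conventions matching \eqref{sigma} under a relabelling of the basis). Second, I would write out \eqref{sigma-LCP} with $\tau=c\,e^7$: the equation $d\varphi=3\tau\wedge\varphi$ becomes $d\omega\wedge e^7+d\psi_+=3c\,e^7\wedge\psi_+$ (since $\omega\wedge e^7\wedge e^7=0$), which separates into $d\psi_+=0$ at the $\frn$-level — forcing half-flatness of one piece — and $(d\omega)\wedge e^7 = 3c\, e^7\wedge\psi_+$, pinning the $e^7$-components of $d\omega$ to $3c\,\psi_+$. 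Likewise $d\ast\varphi=4\tau\wedge\ast\varphi$ gives $\omega\wedge d\omega + d\psi_-\wedge e^7 = 4c\,e^7\wedge(\tfrac12\omega^2)$, whose $\frn$-part reads $\omega\wedge d\omega=0$ (equivalently $d(\omega^2)=0$) and whose $e^7$-part fixes $d\psi_-$. Third, I would insert the Maurer–Cartan form $de^k=\eta_k e^k\wedge e^7+\hat de^k$, separate the $e^7$-free and $e^7$-linear parts of each constraint, and solve: the $e^7$-linear parts express $c$ and linear relations among the $\eta_k$ (the "conformal" normalization, here the common factor $-m$ times explicit rationals), while the $e^7$-free parts are the half-flat conditions $\hat d\psi_+=0$, $\omega\wedge\hat d\omega=0$ that constrain the structure constants $c^k_{ij}$ of $\frn$. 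Fourth — and this is where I would lean directly on Chiossi–Fino — I would identify which nilpotent $\frn$ (from Magnin's list of six-dimensional nilpotent Lie algebras, or from their enumeration) admit such data, obtaining seven cases, and then carry out a linear change of basis in each case to bring the structure equations into the displayed form. Finally I would double-check, in each of the seven algebras, that the Jacobi identity $d^2=0$ holds (i.e.\ $D$ really is a derivation) and that $\varphi$ indeed satisfies \eqref{sigma-LCP} with the appropriate $\tau$, so that the "if" direction is established as well as the "only if".

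The main obstacle is the case analysis solving the half-flat constraints over all six-dimensional nilpotent Lie algebras: that is a genuinely long classification, and reproducing it from scratch would be unreasonable. My intention is therefore to invoke \cite[Theorem 1]{CF} for the enumeration and restrict the original work to the \emph{normalization step} — showing that the seven families there are isomorphic, via explicit invertible changes of the coframe $\{e^1,\dots,e^7\}$, to $\mathfrak{cp}^m_1,\dots,\mathfrak{cp}^m_7$ and that the $\mathrm{G}_2$-form transported along these changes is the stated $\varphi$. The only subtlety I anticipate is matching sign conventions for $\psi_\pm$ and the orientation so that $\ast\varphi$ has the form used in \eqref{sigma-LCP}; once that is fixed, the verification that each $\mathfrak{cp}^m_i$ is LCP reduces to a direct computation of $d\varphi$ and $d\ast\varphi$ against $\tau=-\tfrac{\eta}{3}e^7$ for the appropriate scalar $\eta$, which I would record case by case.
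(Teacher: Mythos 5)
Your proposal matches the paper's proof in its essential strategy: the paper likewise takes the seven families verbatim from the proof of \cite[Theorem 1]{CF} (expressions (9)--(15) there, where $\varphi=\nu^{125}-\nu^{345}+\nu^{567}+\nu^{136}+\nu^{246}-\nu^{237}+\nu^{147}$ and $\tau=m\,\nu^7$) and reduces the whole argument to a single relabelling of the coframe, namely $e^1=\nu^3$, $e^2=\nu^2$, $e^3=\nu^1$, $e^4=\nu^4$, $e^5=-\nu^6$, $e^6=\nu^5$, $e^7=\nu^7$, which simultaneously brings $\varphi$ to the canonical form \eqref{sigma} and the structure equations to the displayed $\mathfrak{cp}^m_s$. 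Your additional discussion of how \eqref{sigma-LCP} splits into half-flatness plus eigenvalue constraints is consistent with the surrounding text but is not needed for the paper's (purely change-of-basis) proof.
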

\begin{proof}
All the Lie algebras fulfilling the hypothesis of the theorem  are originally expressed (see the proof of \cite[Theorem 1]{CF}, expression numbers from (9) to (15)) in a basis
$\{\nu^1,\ldots,\nu^7\}$ of $\frs^*$ where the G$_2$-structure $\varphi$ adopts the following  expression:
$$
\varphi=\nu^{125}-\nu^{345}+\nu^{567}+\nu^{136}+\nu^{246}-\nu^{237}+\nu^{147}.
$$
In all the cases, $\varphi$ turns out to be locally conformal parallel with Lee 1-form $\tau = m\,\nu^7$. Now, for every Lie algebra the new
basis of 1-forms:
$$ e^1=\nu^3,\quad e^2=\nu^2,\quad e^3=\nu^1,\quad  e^4=\nu^4,\quad  e^5=- \nu^6,\quad e^6=\nu^5,\quad e^7= \nu^7,$$
expresses $\varphi$ in our canonical way \eqref{sigma}, and the structure equations of $\frak{cp}_s^m$, $1\leq s\leq 7,$ result as above.
\end{proof}

A quick inspection of the Lie algebras $\mathfrak{cp}_s^m$ listed in Proposition~\ref{algebrasFR} reveals that each of them is determined by
two tuples: one containing the eigenvalues $(\eta_1,\ldots, \eta_6)$ of the derivation $D$ and other one including the
non-identically zero structure constants $(c_{36}^1,\, c_{45}^1,\,c_{35}^2,\, c_{46}^2,\, c_{26}^4,\,c_{23}^5)$ of  the underlying
6-dimensional Lie algebra. In Table~\ref{tabla_algebras} we set both tuples for each case.

\begin{table}[h!]
\renewcommand{\arraystretch}{2.0}
\begin{center}
\begin{tabular}{|c|c|c|}
\hline
&$(\eta_1,\ldots, \eta_6)$&$(c_{36}^1,\, c_{45}^1,\,c_{35}^2,\, c_{46}^2,\, c_{26}^4,\,c_{23}^5)$\\
\hline
$\mathfrak{cp}_1^m$&$(-m, -m, -m, -m, -m, -m)$&$(0, 0, 0, 0, 0, 0)$\\
\hline
$\mathfrak{cp}_2^m$&$(-\frac{4m}{3}, -m, -\frac{2m}{3}, -m, -m, -\frac{2m}{3})$&$(\frac{2m}{3}, 0, 0, 0, 0, 0)$\\
\hline
$\mathfrak{cp}_3^m$&$(-\frac{3m}{2}, -m, -\frac{3m}{4}, -\frac{3m}{4}, -\frac{3m}{4}, -\frac{3m}{4})$&$(\frac{m}{2}, \frac{m}{2}, 0, 0, 0, 0)$\\
\hline
$\mathfrak{cp}_4^m$&$(-\frac{7m}{5}, -\frac{6m}{5}, -\frac{4m}{5}, -\frac{3m}{5}, -\frac{4m}{5}, -\frac{3m}{5})$&$(\frac{2m}{5}, \frac{2m}{5}, 0, -\frac{2m}{5}, 0, 0)$\\
\hline
$\mathfrak{cp}_5^m$&$(-\frac{5m}{4}, -\frac{5m}{4}, -m, -\frac{m}{2}, -\frac{3m}{4}, -\frac{3m}{4})$&$(0, \frac{m}{2}, 0, -\frac{m}{2}, 0, 0)$\\
\hline
$\mathfrak{cp}_6^m$&$(-\frac{4m}{3}, -\frac{4m}{3}, -\frac{2m}{3}, -\frac{2m}{3}, -\frac{2m}{3}, -\frac{2m}{3})$&$(\frac{m}{3}, \frac{m}{3}, \frac{m}{3}, -\frac{m}{3}, 0, 0)$\\
\hline
$\mathfrak{cp}_7^m$&$(-\frac{6m}{5}, -\frac{3m}{5}, -\frac{3m}{5}, -\frac{6m}{5}, -\frac{6m}{5}, -\frac{3m}{5})$&$(\frac{2m}{5}, 0, 0, 0, \frac{2m}{5}, \frac{2m}{5})$\\
\hline
\end{tabular}
\end{center}
\caption{Defining parameters of the Lie algebras $\mathfrak{cp}_s^m$.}\label{tabla_algebras}
\end{table}

Now, we shall study solutions to the Laplacian flow on every Lie algebra $\mathfrak{cp}_s^m$. As we  mentioned before,
we assume a family of $\mathrm{G}_2$-structures $\varphi(t)$ given by~\eqref{eqn:expression_sol} where the unknown data are some differentiable real functions $f_i(t)$ depending on a parameter
$t\in I\subset{\mathbb{R}}$ such that $f_i(0)=1$ and $f_i(t)\neq0$, for any $t\in I$, where $I$ is a real open interval.   Observe that in fact, the functions $f_i(t)$ are positive. The basis
$x^i(t)=f_i(t)e^i$ is adapted to the $\mathrm{G}_2$-structure at any $t$, and the structure equations for any of the Lie algebras $\mathfrak{cp}_s^m$
depend on the functions and defining parameters of the algebras contained in Table~\ref{tabla_algebras}:

\begin{equation}\label{structure-eq-x}
\begin{cases}
\begin{array}{llllll}
dx^1 &=& \dfrac{\eta_1}{f_7(t)}\,x^{17} +c_{36}^1 \dfrac{f_1(t)}{f_{36}(t)}\,x^{36} + c_{45}^1\dfrac{f_1(t)}{f_{45}(t)}\,x^{45},&\qquad dx^5& =& \dfrac{\eta_5}{f_7(t)}\,x^{57} + c_{23}^5\dfrac{f_5(t)}{f_{23}(t)}\,x^{23},\\[10pt]
dx^2 &=& \dfrac{\eta_2}{f_7(t)}\,x^{27} +c_{35}^2\dfrac{f_2(t)}{f_{35}(t)}\,x^{35} +c_{46}^2 \dfrac{f_2(t)}{f_{46}(t)}\,x^{46},&\qquad dx^6 &=& \dfrac{\eta_6}{f_7(t)}\,x^{67},\\[10pt]
dx^3& =& \dfrac{\eta_3}{f_7(t)}\,x^{37},&\qquad dx^7 &=& 0.\\[10pt]
dx^4& =& \dfrac{\eta_4}{f_7(t)}\,x^{47} +c_{26}^4 \dfrac{f_4(t)}{f_{26}(t)}\,x^{26},
\end{array}
\end{cases}
\end{equation}

Since we want to solve the LCP-flow~\eqref{LCP-flow-eq}, we need to solve two equations.  Let us start looking for necessary and
sufficient conditions on the evolution functions $f_i(t)$ in order to preserve the locally conformal parallel condition, i.e, $\varphi(t)\in \mathcal X_4$, that we state in a
 more restrictive version imposing that the Lee 1-form remains constant along the flow:
\begin{proposition}\label{prop-CP}
The family of $\mathrm{G}_2$-structures $\varphi(t)$ given by~\eqref{eqn:expression_sol} satisfies
\begin{equation}\label{conditions-CP}
d\varphi(t) = 3 m\,e^7\wedge \varphi(t),\quad d\ast_t\varphi(t) = 4 m\,e^7\wedge \ast_t\varphi(t),
\end{equation}
and in particular remains locally conformal parallel if and only if the evolution
functions $f_i(t)$, $1\leq i\leq 7,$ satisfy the following conditions:
\begin{itemize}
\item $\mathfrak{cp}^m_1$: For any $f_i(t)$,  $i=1,\ldots, 7$.
\item $\mathfrak{cp}^m_2$: $f_{17}(t) = f_{36}(t)$.
\item $\mathfrak{cp}^m_3$: $f_{17}(t) = f_{36}(t) = f_{45}(t)$.
\item $\mathfrak{cp}^m_4$: $f_{17}(t)= f_{36}(t) = f_{45} (t)$, $f_{27}(t)= f_{46}(t)$.
\item $\mathfrak{cp}^m_5$: $f_{17}(t)= f_{45}(t)$, $f_{27}(t) = f_{46}(t)$.
\item $\mathfrak{cp}^m_6$: $f_{17}(t) = f_{36}(t) = f_{45}(t)$, $f_{27}(t) = f_{35}(t) =  f_{46}(t)$.
\item $\mathfrak{cp}^m_7$: $f_{17}(t) = f_{36}(t)$, $\ f_{23}(t) = f_{57}(t)$, $\ f_{26}(t) = f_{47}(t)$.
\end{itemize}
\end{proposition}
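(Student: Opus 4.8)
The plan is to reduce each of the two equations in~\eqref{conditions-CP} to a system of scalar identities for the $f_i(t)$ by expanding all forms in a fixed basis and matching coefficients. First I would fix the Hodge dual. For every $t$ the coframe $\{x^1(t),\dots,x^7(t)\}$ is orthonormal for the induced metric $g_{\varphi(t)}$ and positively oriented (the $f_i(t)$ being positive), so $\ast_t$ sends each wedge of the $x^i$ to the complementary wedge with the standard sign; concretely $\ast_t\varphi(t)=x^{1234}+x^{1256}+x^{3456}+x^{1367}+x^{1457}+x^{2357}-x^{2467}$. Using the structure equations~\eqref{structure-eq-x} (recall $dx^7=0$ and $e^7=x^7/f_7$) I would then write $d\varphi(t)$, $3m\,e^7\wedge\varphi(t)$, $d\ast_t\varphi(t)$ and $4m\,e^7\wedge\ast_t\varphi(t)$ in the bases of $4$- and $5$-forms $\{x^I\}$ and equate coefficients.

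The key is the shape of these coefficient equations. Write $dx^k=\tfrac{\eta_k}{f_7}x^{k7}+\sum_{i<j}c_{ij}^k\tfrac{f_k}{f_{ij}}x^{ij}$ as in~\eqref{structure-eq-x}. Every monomial of $3m\,e^7\wedge\varphi(t)$ contains the index $7$; and so does every monomial of $d\varphi(t)=\sum_{(p,q,r)\in A\cup B}\varepsilon(p,q,r)\,dx^{pqr}$, since a factor $c_{ij}^k x^{ij}$ with $i,j\le6$ either meets one of the summands $x^{127},x^{347},x^{567}$ of $\varphi(t)$ and thereby acquires a $7$, or is killed by an index overlap with $x^{135},x^{146},x^{236},x^{245}$; the same holds for $d\ast_t\varphi(t)$. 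Hence, for a monomial $x^I$ with $7\in I$, the coefficient of $x^I$ in $d\varphi(t)$ has the form $\tfrac1{f_7}E_0+\sum\pm c_{ij}^k\tfrac{f_k}{f_{ij}}$ with $E_0$ a fixed number (a sum of $\eta$'s), while the coefficient of $x^I$ in $3m\,e^7\wedge\varphi(t)$ equals $\tfrac1{f_7}$ times the value of that expression at $f_i\equiv1$ (this is precisely the $t$-independent identity $d\varphi_0=3m\,e^7\wedge\varphi_0$ of Proposition~\ref{algebrasFR}, $\varphi_0=\sum_{(p,q,r)\in A\cup B}\varepsilon(p,q,r)e^{pqr}$ being locally conformal parallel with Lee form $\tau=m\,e^7$). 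Subtracting, the $x^I$-equation collapses to $\sum\pm c_{ij}^k\bigl(\tfrac1{f_7}-\tfrac{f_k}{f_{ij}}\bigr)=0$; the identical argument applied to $d\ast_t\varphi(t)=4m\,e^7\wedge\ast_t\varphi(t)$ gives equations of the same type.

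It then remains to read the conditions off case by case from Table~\ref{tabla_algebras}. For $\mathfrak{cp}^m_1$ all $c_{ij}^k$ vanish, so every coefficient equation is an identity and nothing is imposed. For $\mathfrak{cp}^m_2,\dots,\mathfrak{cp}^m_6$ one checks, running through the $4$-form monomials of $d\varphi(t)$, that each nonzero $c_{ij}^k$ occurs alone in some monomial, whose equation then reads $c_{ij}^k\bigl(\tfrac1{f_7}-\tfrac{f_k}{f_{ij}}\bigr)=0$, i.e.\ $f_{k7}(t)=f_{ij}(t)$; collecting these over all nonzero structure constants reproduces exactly the displayed lists, and conversely these identities make every coefficient equation (for $d\varphi$ and for $d\ast_t\varphi$) hold. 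For $\mathfrak{cp}^m_7$ the three constants $c_{36}^1, c_{26}^4, c_{23}^5$ all feed into the single $4$-form $x^{2367}$ of $d\varphi(t)$, so the Laplacian-flow equation by itself yields only $f_1/f_{36}+f_4/f_{26}+f_5/f_{23}=3/f_7$; here one must bring in the coflow equation, in which these constants appear separately in the $5$-forms $x^{34567},x^{12567},x^{12347}$ of $d\ast_t\varphi(t)$, to recover $f_{17}=f_{36}$, $f_{47}=f_{26}$, $f_{57}=f_{23}$ (which imply the former relation). This proves the stated equivalence for~\eqref{conditions-CP}, and since~\eqref{conditions-CP} is the case $\tau=m\,e^7$ of~\eqref{sigma-LCP}, the $\Gtwo$-structure $\varphi(t)$ remains in particular locally conformal parallel.

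I expect the main obstacle to be bookkeeping rather than anything conceptual: expanding the seven families of structure equations, tracking — with the correct signs coming from reordering wedge factors — the (usually few, occasionally several) monomials into which each $c_{ij}^k$ propagates under $d$ when forming $d\varphi(t)$ and $d\ast_t\varphi(t)$, and checking that the Laplacian-flow and coflow coefficient systems together determine every $f_{ij}$-relation — in particular, noticing that for $\mathfrak{cp}^m_7$ the coflow equation is indispensable. The observation that a factor $c_{ij}^k x^{ij}$ survives a wedge only against a summand of $\varphi(t)$ (resp.\ of $\ast_t\varphi(t)$) that contains the index $k$ and is disjoint from $\{i,j\}$ keeps this tracking manageable.
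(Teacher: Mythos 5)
Your proposal is correct and follows essentially the same route as the paper's proof: compute $d\varphi(t)$ and $d\ast_t\varphi(t)$ from the structure equations~\eqref{structure-eq-x}, equate coefficients with $3m\,e^7\wedge\varphi(t)$ and $4m\,e^7\wedge\ast_t\varphi(t)$ to obtain the two scalar systems, and substitute the parameters of Table~\ref{tabla_algebras}; your use of the $t=0$ identity to collapse each coefficient equation to $\sum\pm c_{ij}^k\bigl(\tfrac{1}{f_7}-\tfrac{f_k}{f_{ij}}\bigr)=0$ is only a mild streamlining of the same computation. Your observation that for $\mathfrak{cp}^m_7$ the $d\varphi$-equation alone gives only the sum relation $3f_{236}=f_{127}+f_{347}+f_{567}$ and that the $d\ast_t\varphi$-equations are needed to separate it into $f_{17}=f_{36}$, $f_{23}=f_{57}$, $f_{26}=f_{47}$ is accurate (though calling the two halves of the $\mathcal X_4$ condition the ``flow'' and ``coflow'' equations is a slight misnomer), and it is implicit in the paper's simultaneous substitution into both systems.
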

\begin{proof}
  Let us start computing $d\varphi(t)$ using the general structure equations~\eqref{structure-eq-x}. Directly:

  \begin{equation*}
  \begin{array}{lll}
  d\varphi(t)&=& x^{1357}\left[\dfrac{\eta_1+\eta_3+\eta_5}{f_7(t)}-c^2_{35}\dfrac{f_2(t)}{f_{35}(t)}\right] -
  x^{1467}\left[\dfrac{\eta_1+\eta_4+\eta_6}{f_7(t)}+c^2_{46}\dfrac{f_2(t)}{f_{46}(t)}\right] -\\[12pt]
  && x^{2367}\left[\dfrac{\eta_2+\eta_3+\eta_6}{f_7(t)}-c_{36}^1\dfrac{f_1(t)}{f_{36}(t)}-c_{26}^4\dfrac{f_4(t)}{f_{26}(t)}
  -c_{23}^5\dfrac{f_5(t)}{f_{23}(t)}\right] - x^{2457}\left[\dfrac{\eta_2+\eta_4+\eta_5}{f_7(t)}-c_{45}^1\dfrac{f_1(t)}{f_{45}(t)}\right].
  \end{array}
\end{equation*}

  Now, the equation $d\varphi(t) = 3m\,e^7\wedge \varphi(t)=\frac{3m}{f_7(t)}\,x^7\wedge \varphi(t)$ is equivalent to the following system of equations:
  \begin{equation}\label{dsigma}
  \begin{cases}
  (\eta_1+\eta_3+\eta_5+3m)\,f_{135}(t) = c_{35}^2\,f_{127}(t),\\[5pt]
  (\eta_1+\eta_4+\eta_6+3m)\,f_{146}(t) = -c_{46}^2\,f_{127}(t),\\[5pt]
  (\eta_2+\eta_4+\eta_5+3m)\,f_{245}(t) =c_{45}^1\,f_{127}(t),\\[5pt]
  (\eta_2+\eta_3+\eta_6+3m)\,f_{236}(t) = c_{36}^1\,f_{127}(t) + c_{26}^4\,f_{347}(t) + c_{23}^5\,f_{567}(t).\\[5pt]
  \end{cases}
  \end{equation}

  Similar computations for $d\ast_t\varphi(t)$ yield:
  \begin{equation*}
  \begin{array}{lll}
  d\ast_t\varphi(t)&=& -x^{12347}\left[\dfrac{\eta_1+\eta_2+\eta_3+\eta_4}{f_7(t)}-c_{23}^5\dfrac{f_5(t)}{f_{23}(t)}\right] - x^{12567}\left[\dfrac{\eta_1+\eta_2+\eta_5+\eta_6}{f_7(t)}-c_{26}^4\dfrac{f_4(t)}{f_{26}(t)}\right] - \\[12pt]
  && x^{34567}\left[\dfrac{\eta_3+\eta_4+\eta_5-\eta_6}{f_7(t)}-c_{36}^1\dfrac{f_1(t)}{f_{36}(t)}-c_{45}^1\dfrac{f_1(t)}{f_{45}(t)} -c_{35}^2\dfrac{f_2(t)}{f_{35}(t)}+c_{46}^2\dfrac{f_2(t)}{f_{46}(t)}\right].
  \end{array}
\end{equation*}

  Again, solving the equation $d\ast_t\varphi(t) = 4m\,e^7\wedge \ast_t\varphi(t) = \frac{4m}{f_7(t)} x^7\wedge\ast_t\varphi(t)$ is equivalent
  to solve the system of equations:
  \begin{equation}\label{d-ast-sigma}
  \begin{cases}
  \eta_1+\eta_2+\eta_3+\eta_4+4m= c_{23}^5\,\dfrac{f_{57}(t)}{f_{23}(t)},\\[12pt]
  \eta_1+\eta_2+\eta_5+\eta_6+4m= c_{26}^4\,\dfrac{f_{47}(t)}{f_{26}(t)},\\[12pt]
  \eta_3+\eta_4+\eta_5+\eta_6+4m= c_{36}^1\,\dfrac{f_{17}(t)}{f_{36}(t)} + c_{45}^1\,\dfrac{f_{17}(t)}{f_{45}(t)} + c_{35}^2\,\dfrac{f_{27}(t)}{f_{35}(t)} - c_{46}^2\,\dfrac{f_{27}(t)}{f_{46}(t)}.
  \end{cases}
  \end{equation}

The final result is obtained by substituting the defining parameters of the Lie algebras $\mathfrak{cp}_s^m$  listed in Table~\ref{tabla_algebras} in both the expressions~\eqref{dsigma} and \eqref{d-ast-sigma}.\end{proof}

After solving $\varphi(t)\in \mathcal X_4$, let us focus on the evolution equation $\dfrac{d\varphi(t)}{dt}=\Delta_t\varphi(t)$.  Next,  we get a generic expression of the
Laplacian $\Delta_t \varphi(t)$ suitable for any of the Lie algebras $\mathfrak{cp}_s^m$.

\begin{proposition}\label{prop-Lap}
Let $\varphi(t)$ be a family of $\mathrm{G}_2$-structures given by~\eqref{eqn:expression_sol} and remaining locally conformal parallel in the sense of~\eqref{conditions-CP}, for each Lie algebra $\mathfrak{cp}_s^m$ the Laplacian $\Delta_t \varphi(t)$ is given by:
\begin{equation*}
\Delta_t \varphi(t)=\sum_{(i,j,k)\in A\cup B} \varepsilon(i,j,k)\Delta_{ijk}\,x^{ijk} \,
\end{equation*}
where
\begin{equation}\label{laplacian2}
\begin{array}{ll}
\Delta_{127}&= \frac{m}{f^2_7}\left[3(4m + \eta_3 + \eta_4 + \eta_5 + \eta_6) + 4(\eta_1 + \eta_2) \right],\\[5pt]
 \Delta_{347}& = \frac{m}{f^2_7} \left[\frac{6m}{5}\,\delta_7 + 4 \left(\eta_3 + \eta_4\right) \right], \\[5pt]
\Delta_{567}&=\frac{m}{f^2_7} \left[ \frac{6m}{5}\,\delta_7 + 4 \left(\eta_5 + \eta_6\right) \right], \\[5pt]
\Delta_{135}&=  \frac{m}{f^2_7}\left[ \frac{4m}{3}\,\delta_6 + 3\left(\eta_2+ \eta_4+\eta_6\right)\right],\\[5pt]
\Delta_{146}& = \frac{m}{f^2_7} \left[\frac{8m}{5}\,\delta_4 + 2m\,\delta_5 + \frac{4m}{3}\,\delta_6  + 3\left(\eta_2+ \eta_3+\eta_5\right)\right],\\[5pt]
  \Delta_{236}&=\frac{m}{f^2_7}\left[\frac{8m}{3}\,\delta_2 +  2m\,\delta_3 + \frac{8m}{5}\,\delta_4 + \frac{4m}{3}\,\delta_6 + \frac{24 m}{5}\,\delta_7 + 3\left(\eta_1+ \eta_4+\eta_5\right)\right], \\[5pt]
  \Delta_{245}&=\frac{m}{f^2_7} \left[2m\,\delta_3 + \frac{8m}{5}\,\delta_4 + 2m\,\delta_5 + \frac{4m}{3}\,\delta_6 - 3\left(\eta_1+ \eta_3+\eta_6\right)\right],
\end{array}
\end{equation} and $\delta_s= \begin{cases}1,\, \text{ if }\frg\cong \mathfrak{cp}_s^m,\\ 0,\, \text{ if }\frg\ncong \mathfrak{cp}_s^m.  \end{cases}$
\end{proposition}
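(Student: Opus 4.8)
The plan is to compute $\Delta_t\varphi(t)=dd^*\varphi(t)+d^*d\varphi(t)$ by exploiting that, under \eqref{conditions-CP}, $\varphi(t)$ keeps the \emph{fixed} Lee $1$-form $\tau=m\,e^7$, so that $d\varphi(t)=3\,\tau\wedge\varphi(t)$ and $d\ast_t\varphi(t)=4\,\tau\wedge\ast_t\varphi(t)$. First I would pass to codifferentials: on a $7$-manifold $d^*=(-1)^p\ast_t d\ast_t$ on $p$-forms, and using the identity $\ast_t(\alpha\wedge\omega)=(-1)^{\deg\omega}\iota_{\alpha^\sharp}\ast_t\omega$ for a $1$-form $\alpha$ together with $\ast_t^2=\mathrm{id}$ on $3$- and $4$-forms (so $\ast_t\ast_t\varphi(t)=\varphi(t)$), one obtains
\[
d^*\varphi(t)=-4\,\iota_{\tau^\sharp}\varphi(t),\qquad
d^*d\varphi(t)=3\,d^*\!\bigl(\tau\wedge\varphi(t)\bigr)=-3\,\ast_t d\bigl(\iota_{\tau^\sharp}\ast_t\varphi(t)\bigr),
\]
hence $\Delta_t\varphi(t)=-4\,d\bigl(\iota_{\tau^\sharp}\varphi(t)\bigr)-3\,\ast_t d\bigl(\iota_{\tau^\sharp}\ast_t\varphi(t)\bigr)$. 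In the adapted coframe $\{x^i=f_ie^i\}$ one has $\tau^\sharp=\tfrac{m}{f_7}x_7$, and a single computation gives $\iota_{\tau^\sharp}\varphi(t)=\tfrac{m}{f_7}(x^{12}+x^{34}+x^{56})$ and $\iota_{\tau^\sharp}\ast_t\varphi(t)=\tfrac{m}{f_7}(x^{246}-x^{235}-x^{145}-x^{136})$, so that
\[
\Delta_t\varphi(t)=-\frac{4m}{f_7}\,d\bigl(x^{12}+x^{34}+x^{56}\bigr)-\frac{3m}{f_7}\,\ast_t d\bigl(x^{246}-x^{235}-x^{145}-x^{136}\bigr).
\]
(Equivalently one can proceed directly: compute $\ast_t\varphi(t)$, reuse the expressions for $d\varphi(t)$ and $d\ast_t\varphi(t)$ found in the proof of Proposition~\ref{prop-CP}, and apply $\ast_t d\ast_t$; I would take whichever route is cleaner to typeset.)

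Next I would expand the two exterior derivatives using the structure equations \eqref{structure-eq-x}. Each $dx^i$ ($i\le 6$) splits into a ``diagonal'' part $\tfrac{\eta_i}{f_7}x^{i7}$ and a ``six-dimensional'' part $\sum c^i_{jk}\tfrac{f_i}{f_{jk}}x^{jk}$ with $j,k\le 6$; consequently $d(x^{12}+x^{34}+x^{56})$ involves only the monomials $x^{127},x^{347},x^{567},x^{135},x^{146},x^{236},x^{245}$, and after applying $\ast_t$ the $4$-form $d(x^{246}-x^{235}-x^{145}-x^{136})$ involves precisely the same seven monomials. This already yields the asserted shape $\Delta_t\varphi(t)=\sum_{(i,j,k)\in A\cup B}\varepsilon(i,j,k)\Delta_{ijk}\,x^{ijk}$ with no extraneous terms. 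To put the six-dimensional ratios into closed form I would invoke the identities obtained inside the proof of Proposition~\ref{prop-CP}: equations \eqref{dsigma} and \eqref{d-ast-sigma}, once the LCP constraints among the $f_i$ hold, are exactly the statements that each such ratio equals an affine combination of $\eta_1,\dots,\eta_6,m$ divided by $f_7$ --- for instance $c^2_{35}\tfrac{f_2}{f_{35}}=\tfrac{1}{f_7}(\eta_1+\eta_3+\eta_5+3m)$, $c^4_{26}\tfrac{f_4}{f_{26}}=\tfrac{1}{f_7}(\eta_1+\eta_2+\eta_5+\eta_6+4m)$, and $c^1_{36}\tfrac{f_1}{f_{36}}+c^4_{26}\tfrac{f_4}{f_{26}}+c^5_{23}\tfrac{f_5}{f_{23}}=\tfrac{1}{f_7}(\eta_2+\eta_3+\eta_6+3m)$. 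Substituting these produces the overall factor $m/f_7^2$ and leaves every $\Delta_{ijk}$ as an explicit affine function of $\eta_1,\dots,\eta_6$ and $m$.

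Finally I would run the case analysis over $\mathfrak{cp}^m_1,\dots,\mathfrak{cp}^m_7$. Feeding the eigenvalue tuple $(\eta_1,\dots,\eta_6)$ and the structure-constant tuple $(c^1_{36},c^1_{45},c^2_{35},c^2_{46},c^4_{26},c^5_{23})$ from Table~\ref{tabla_algebras} into those affine expressions, and using that for each algebra the nonzero structure constants are forced --- again via \eqref{dsigma}--\eqref{d-ast-sigma} and the LCP relations of Proposition~\ref{prop-CP} --- to equal prescribed multiples of $m$ (e.g.\ $\eta_1+\eta_3+\eta_5+3m=\tfrac{m}{3}\delta_6$, $\eta_1+\eta_4+\eta_6+3m=\tfrac{2m}{5}\delta_4+\tfrac{m}{2}\delta_5+\tfrac{m}{3}\delta_6$, $\eta_2+\eta_3+\eta_6+3m=\tfrac{2m}{3}\delta_2+\tfrac{m}{2}\delta_3+\tfrac{2m}{5}\delta_4+\tfrac{m}{3}\delta_6+\tfrac{6m}{5}\delta_7$, $\eta_1+\eta_2+\eta_5+\eta_6+4m=\tfrac{2m}{5}\delta_7$, $\eta_1+\eta_2+\eta_3+\eta_4+4m=\tfrac{2m}{5}\delta_7$), one checks that the seven cases collapse into the single formula \eqref{laplacian2} with the indicator $\delta_s$. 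The main obstacle I anticipate is bookkeeping rather than conceptual: carefully tracking the Hodge-star signs in the expansion of $\ast_t d(x^{246}-x^{235}-x^{145}-x^{136})$ --- the star of a $4$-form on a $7$-manifold is the error-prone step --- and organizing the seven substitutions so that the coincidences producing the $\delta_s$-terms are transparent.
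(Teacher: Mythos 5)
Your proposal is correct and follows essentially the same route as the paper: the paper likewise uses the LCP conditions~\eqref{conditions-CP} to reduce $\Delta_t\varphi(t)=-d\ast_t d\ast_t\varphi(t)+\ast_t d\ast_t d\varphi(t)$ to $-4\,d(x^{12}+x^{34}+x^{56})$ and $3\ast_t d(x^{136}+x^{145}+x^{235}-x^{246})$ (up to the common factor in $m$ and $f_7$), expands via the structure equations~\eqref{structure-eq-x}, and then substitutes the relations~\eqref{dsigma}--\eqref{d-ast-sigma} together with Table~\ref{tabla_algebras}. Your interior-product phrasing of $d^*$ and the explicit affine identities such as $c^2_{35}f_2/f_{35}=(\eta_1+\eta_3+\eta_5+3m)/f_7$ are exactly the paper's intermediate steps in slightly different notation, and they check out.
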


\begin{proof}
The Laplacian operator on 3-forms is defined as: $\Delta_t\varphi(t) = -d\ast d\ast \varphi(t) + \ast d\ast d\varphi(t)$.  Taking into account the
conformally parallel conditions~\eqref{conditions-CP} expressed in terms of the orthonormal basis $\{x_i\}_{i=1}^7$, the Laplacian of $\varphi(t)$ can be computed as:
\begin{eqnarray*}
\Delta_t\varphi(t)& = &\frac{m}{f^2_7(t)}\left[-4 \left(d\ast (x^7\wedge \ast \varphi(t))\right) + 3 \left(\ast d\ast (x^7\wedge\varphi(t))\right)\right]\\[6pt]
&=& \frac{m}{f^2_7(t)}\left[-4 \left(d\ast (x^{12347}+x^{12567}+x^{34567})\right) + 3 \left(\ast d\ast (-x^{1357}+x^{1467}+x^{2367} + x^{2457})\right)\right]\\[6pt]
&=& \frac{m}{f^2_7(t)}\left[-4 \left(d (x^{12}+x^{34}+x^{56})\right) + 3 \left(\ast d(x^{136} +x^{145}+x^{235}-x^{246})\right)\right].
\end{eqnarray*}
If we apply~\eqref{structure-eq-x} and the Hodge star operator in the second summand, we obtain the following expression 
\begin{equation*}\label{laplacian1}
\begin{array}{lll}
\Delta_t\varphi(t) &=& \frac{m}{f^2_7(t)} \left[ 3 \left(c_{36}^1 \dfrac{f_1(t)}{f_{36}(t)} + c_{45}^1 \dfrac{f_1(t)}{f_{45}(t)} + c_{35}^2 \dfrac{f_2(t)}{f_{35}(t)} - c_{46}^2 \dfrac{f_2(t)}{f_{46}(t)}\right) + 4 \left(\dfrac{\eta_1+\eta_2}{f_7(t)}\right) \right]  x^{127}\\[15pt]
&&+ \frac{m}{f^2_7(t)}\left[ 3\,c_{26}^4 \dfrac{f_4(t)}{f_{26}(t)} + 4 \left(\dfrac{\eta_3+\eta_4}{f_7(t)}\right) \right] x^{347} + \frac{m}{f^2_7(t)} \left[ 3\,c_{23}^5 \dfrac{f_5(t)}{f_{23}(t)} + 4 \left(\dfrac{\eta_5+\eta_6}{f_7(t)}\right) \right] x^{567}\\[15pt]
&& +\frac{m}{f^2_7(t)} \left[ 3\left(\dfrac{\eta_2+\eta_4+\eta_6}{f_{7}(t)} \right)+ 4 \, c_{35}^2 \dfrac{f_2(t)}{f_{35}(t)} \right] x^{135}  + \frac{m}{f^2_7(t)} \left[ -3\left(\dfrac{\eta_2+\eta_3+\eta_5}{f_{7}(t)} \right)+ 4 \,c_{46}^2 \dfrac{f_2(t)}{f_{46}(t)} \right] x^{146}\\[15pt]
&& +  \frac{m}{f^2_7(t)}\left[ -3 \left(\dfrac{\eta_1+\eta_4+\eta_5}{f_{7}(t)}\right) -4\left(c_{36}^1 \dfrac{f_1(t)}{f_{36}(t)}+ c_{26}^4 \dfrac{f_4(t)}{f_{26}(t)} + c_{23}^5 \dfrac{f_5(t)}{f_{23}(t)} \right) \right] x^{236}\\[15pt]
&& +  \frac{m}{f^2_7(t)}\left[ -3\left(\dfrac{\eta_1+\eta_3+\eta_6}{f_{7}(t)} \right)- 4 \, c_{45}^1 \dfrac{f_1(t)}{f_{45}(t)} \right] x^{245}.
\end{array}
\end{equation*}
To get the final expression just apply Proposition~\ref{prop-CP} together with the defining parameters of the Lie algebras collected in Table~\ref{tabla_algebras}.
\end{proof}
\begin{remark}\label{remark-coefs}
We notice that for any family of $\mathrm{G}_2$-structures $\varphi(t)$ given by~\eqref{eqn:expression_sol} and any $(i,\,j,\,k)\in A\cup B$ the expressions of $f_7^2(t)\Delta_{ijk}$ obtained in~\eqref{laplacian2}  depend only on the defining
parameters of the Lie algebra $\mathfrak{cp}_s^m$ and not on the functions $f_i(t)$.
\end{remark}

\section{Long time solutions of the Laplacian flow of an LCP $\mathrm{G}_2$-structure}
 In this section we obtain long time solutions for the Laplacian flow on the solvable Lie groups $S_s$, $s=1,\ldots,7,$ where $S_s$ has underlying Lie algebra $\mathfrak{cp}_s^m$ described in Proposition \ref{algebrasFR} in terms of a basis
 $\{e^1,\dots,e^7\}$ such that the canonical 3-form $\varphi_0$ given by \eqref{sigma} is an LCP $\mathrm{G}_2$-structure.
 We divide our study  starting by the solvable Lie group $S_1$ as the results obtained on it guide the method on the rest of cases.

 \begin{theorem}\label{Thm-cp1}
Let $S_1$ be a solvable Lie group with underlying Lie algebra $\frak{cp}_1^m$. The family of $\mathrm{G}_2$-structures given by:
\begin{equation*}
  \varphi(t)=(1-4m^2t)^2\left(e^{12}+e^{34}+e^{56}\right)\wedge e^7+(1-4m^2t)^{\frac94}\left(e^{135}-e^{146}-e^{236}-e^{245}\right), \quad t\in I=\left(-\infty,\frac{1}{4m^2}\right)
\end{equation*}
is the unique solution for the Laplacian LCP-flow~\eqref{LCP-flow-eq}. Moreover, the underlying metric $g(t)$ is Einstein at any $t\in I$ and converges to a flat metric as t goes to $-\infty$.
\end{theorem}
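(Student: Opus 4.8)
The plan is to carry out everything inside the diagonal ansatz~\eqref{eqn:expression_sol}, so that the unknowns are the seven positive functions $f_i(t)$ with $f_i(0)=1$. The first remark is that for $\mathfrak{cp}_1^m$ the locally conformal parallel condition is free: by the first bullet of Proposition~\ref{prop-CP} the identities~\eqref{conditions-CP} hold for \emph{every} choice of the $f_i$, so the constraint $\varphi(t)\in\mathcal X_4$ in~\eqref{LCP-flow-eq} imposes nothing and only the evolution equation $\tfrac{d}{dt}\varphi(t)=\Delta_t\varphi(t)$ remains. By~\eqref{eq:evol_eq_expand1} together with Proposition~\ref{prop-Lap} (which ensures $\Delta_t\varphi(t)$ has no components outside $A\cup B$), this is equivalent to the seven scalar equations $\Delta_{ijk}=(f_{ijk})'/f_{ijk}$ for $(i,j,k)\in A\cup B$.

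Next I would specialise~\eqref{laplacian2} to $\mathfrak{cp}_1^m$, i.e.\ put $\eta_1=\dots=\eta_6=-m$ and $\delta_1=1$, $\delta_2=\dots=\delta_7=0$. This gives
$$\Delta_{127}=\Delta_{347}=\Delta_{567}=-\frac{8m^{2}}{f_7^{2}},\qquad \Delta_{135}=\Delta_{146}=\Delta_{236}=\Delta_{245}=-\frac{9m^{2}}{f_7^{2}},$$
so each $\Delta_{ijk}$ depends only on $f_7$ (cf.\ Remark~\ref{remark-coefs}). Feeding the coincidences of these coefficients into Lemma~\ref{lem:equalities2}\,\ref{item:1} yields $f_{127}=f_{347}=f_{567}$, $f_{135}=f_{146}=f_{236}=f_{245}$, and, since $\tfrac98\Delta_{127}=\Delta_{135}$, also $(f_{127})^{9/8}=f_{135}$. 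Combining this with the purely combinatorial identities $f_{127}f_{347}f_{567}=(f_1\cdots f_6)f_7^{3}$ and $f_{135}f_{146}f_{236}f_{245}=(f_1\cdots f_6)^{2}$: from the second one gets $f_1\cdots f_6=(f_{135})^{2}=(f_{127})^{9/4}$, and substituting into the first yields the crucial compatibility relation $f_7^{2}=(f_{127})^{1/2}$.

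Setting $u:=f_{127}$, the only equation left is $u'/u=\Delta_{127}=-8m^{2}u^{-1/2}$, i.e.\ the separable ODE $u'=-8m^{2}\sqrt u$; integrating and imposing $u(0)=1$ gives $u(t)=(1-4m^{2}t)^{2}$, hence $f_{127}=(1-4m^{2}t)^{2}$, $f_{135}=(1-4m^{2}t)^{9/4}$ and $f_7=(1-4m^{2}t)^{1/2}$. Substituting into $\varphi(t)=f_{127}(e^{127}+e^{347}+e^{567})+f_{135}(e^{135}-e^{146}-e^{236}-e^{245})$ reproduces the claimed $\mathrm G_2$-form; requiring $f_7>0$ (equivalently all $f_i>0$) forces the maximal interval $I=(-\infty,\tfrac1{4m^{2}})$, and uniqueness is just uniqueness for the Cauchy problem attached to this first-order system.

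For the metric one has $g(t)=\sum_{i=1}^{7}f_i(t)^{2}(e^i)^{2}$, and the same monomial comparison that produced $f_7^{2}=(f_{127})^{1/2}$ pins down $f_1=\dots=f_6=(1-4m^{2}t)^{3/4}$, so $g(t)=(1-4m^{2}t)^{3/2}\sum_{i=1}^{6}(e^i)^{2}+(1-4m^{2}t)(e^7)^{2}$. To see this is Einstein for all $t$, I would pass to the $g(t)$-orthonormal coframe $\tilde e^i:=f_i(t)e^i$: the structure equations of $\mathfrak{cp}_1^m$ become $d\tilde e^{k}=-\tfrac{m}{f_7(t)}\,\tilde e^{k}\wedge\tilde e^{7}$ for $1\le k\le6$ and $d\tilde e^{7}=0$, which are exactly the structure equations of the solvable model of real hyperbolic space $\mathbb{H}^{7}$ with standard metric and curvature parameter $m/f_7(t)$. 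Hence $g(t)$ has constant sectional curvature $-m^{2}/(1-4m^{2}t)$; in particular it is Einstein with $\mathrm{Ric}(g(t))=-\tfrac{6m^{2}}{1-4m^{2}t}\,g(t)$ at every $t\in I$, and as $t\to-\infty$ this curvature tends to $0$, which is the asserted convergence to a flat metric. The one step that needs genuine care is the algebraic bookkeeping that collapses the seven coupled ODEs to the single scalar equation $u'=-8m^{2}\sqrt u$ — concretely, deriving $f_7^{2}=(f_{127})^{1/2}$ — and, for the Einstein statement, verifying the identification of $(S_1,g(t))$ with a rescaled $\mathbb{H}^{7}$ (equivalently, computing its Ricci tensor directly as in the Appendix); everything else is either quoted from Propositions~\ref{prop-CP}--\ref{prop-Lap} or elementary.
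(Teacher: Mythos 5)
Your proof is correct and follows essentially the same route as the paper: LCP\nobreakdash-preservation is automatic by Proposition~\ref{prop-CP}, the Laplacian coefficients from Proposition~\ref{prop-Lap} together with Lemma~\ref{lem:equalities2} force the algebraic relations among the $f_{ijk}$, and the flow then collapses to an elementary separable ODE with the stated solution and maximal interval. The only (harmless) variations are that you reduce everything to the single equation $u'=-8m^2\sqrt{u}$ for $u=f_{127}$ via the relation $f_7^2=(f_{127})^{1/2}$, where the paper instead derives $f_1=\dots=f_6=f$ and solves two decoupled equations for $f$ and $f_7$, and that you obtain the Einstein property by identifying $(S_1,g(t))$ with a rescaled hyperbolic space of constant curvature $-m^2/(1-4m^2t)$ rather than by the paper's direct computation of the curvature tensor --- a slightly more conceptual way to arrive at the same conclusion $\mathrm{Ric}(g_t)=-\tfrac{6m^2}{1-4m^2t}\,g_t$.
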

\begin{proof}
Taking into account \eqref{structure-eq-x} and the defining parameters of the Lie algebra $\mathfrak{cp}_1^m$ given in Table \ref{tabla_algebras}, the Maurer-Cartan equations in the adapted basis $\{x^1,\dots,x^7\}$ are:

\begin{equation*}
\left\{\begin{array}{lcl}
dx^k&=&-\frac{m}{f_7}\, x^k\wedge x^7, \quad   1\leq k\leq 6,\\[4pt]
dx^7&=&0.
\end{array}\right.
\end{equation*}

Proposition~\ref{prop-CP} shows that for  $\frak{cp}_1^m$ the family of $\mathrm{G}_2$-structures $\varphi(t)$ given by~\eqref{eqn:expression_sol}
remains locally conformal parallel regardless of the evolution functions $f_i(t)$. Then, we only need to solve the evolution
equation $\dfrac{d\varphi(t)}{dt}=\Delta_t\varphi(t)$.

We get  the expression of the Laplacian $\Delta_t\varphi(t)$ substituting the defining parameters of the Lie algebra $\frak{cp}_1^m$ provided in Table~\ref{tabla_algebras} in the
generic formula  given in Proposition~\ref{prop-Lap}:
\begin{equation*}\label{Lap1}
\begin{array}{l}
\Delta_t\varphi(t)=\dfrac{-m^2 }{f^2_7(t)}\left[8\, (x^{127}+ x^{347}+x^{567})+9\, (x^{135}- x^{146}-x^{236}-x^{245})\right].
\end{array}
\end{equation*}

\noindent Now, the equalities:
\begin{equation*}
\Delta_{127}=\Delta_{347}=\Delta_{567}=\frac{-8m^2}{f_7^2(t)}, \quad \Delta_{135}=\Delta_{146}=\Delta_{236}= \Delta_{245}=\frac{-9m^2}{f_7^2(t)},
\end{equation*}

\noindent imply respectively by Lemma~\ref{lem:equalities2} part i) that $f_{12}=f_{34}=f_{56}$ and $f_{135}=f_{146}=f_{236}= f_{245}$. From the first group we get $f_4(t)=\frac{f_{12}(t)}{f_3(t)}$ and
$f_6(t)=\frac{f_{12}(t)}{f_5(t)}$, thus, substituting in the second one we get $f_1^2(t)=f_2^2(t)=f_3^2(t)=f_5^2(t)$. Furthermore, as $f_i(t)>0$, we conclude that $f_i(t)=f(t)$ for any $1\leq i\leq 6$.

At this point, solving the evolution equation \eqref{eq:evol_eq_expand1} reduces to solve the following system of two differential equations with unknowns $f(t)$ and $f_7(t)$:

\begin{equation*}
\begin{cases}
\begin{array}{lll}
\dfrac{-8 m^2}{f_7^2(t)}=\Delta_{127}=\dfrac{f_{127}'}{f_{127}} &=& \dfrac{d}{dt} \ln (f_{127}) =  \dfrac{d}{dt} [\ln (f_{1}(t)) +  \ln (f_{2}(t)) + \ln (f_{7}(t))]=2\dfrac{f'(t)}{f(t)}+\dfrac{f'_7(t)}{f_7(t)},\\[10pt]
\dfrac{-9 m^2}{f_7^2(t)}=\Delta_{135}=\dfrac{f_{135}'}{f_{135}} &=& \dfrac{d}{dt} \ln (f_{135}) =  \dfrac{d}{dt} [\ln (f_{1}(t)) +  \ln (f_{3}(t)) + \ln (f_{5}(t))]=3\dfrac{f'(t)}{f(t)},
\end{array}
\end{cases}
\end{equation*}
%
%
which is equivalent to:
\begin{equation*}
\begin{cases}
\begin{array}{lll}
\dfrac{-2 m^2}{f_7^2(t)} &=& \dfrac{f'_7(t)}{f_7(t)},\\[10pt]
\dfrac{-3 m^2}{f_7^2(t)} &=& \dfrac{f'(t)}{f(t)}.
\end{array}
\end{cases}
\end{equation*}
The first equation involves only $f_7(t)$ and can be explicitly solved: $$f_7(t) f_7'(t) = -2m^2\Longrightarrow f_7(t) = \left(-4m^2 t + C\right)^{\nicefrac12}.$$
Moreover, using the fact that $f_7(0) = 1$, we get that $C=1$ and $f_7(t) = \left(1-4m^2 t\right)^{\nicefrac12}$.  With this value for $f_7(t)$,
it is also possible to solve explicitly the second equation:
$$\dfrac{-3 m^2}{1-4m^2 t} =\dfrac{f'(t)}{f(t)}\Longrightarrow \frac34 \ln (1-4m^2t) = \ln f(t) + C.$$
Again, the value of $C$ is determined imposing the initial condition $f(0)=1$, obtaining that $f(t) = (1-4m^2t)^{\nicefrac34}$. The domains of the functions
$f(t)$ and $f_7(t)$ imply that the family $\varphi(t)$ of $\mathrm{G}_2$-structures is defined for any $t\in I=(-\infty,\frac{1}{4m^2})$.

Concerning the metric, it turns out that the non-vanishing components  of the  curvature tensor
$R_{ijkl}=g(R(x_i,x_j)x_k,x_l)$ at any $t\in I$ are (modulo its symmetry properties):
\begin{equation*}
  R_{ijji}=-\frac{m^2}{1-4 m^2 t} \quad \text{ for any } 1\leq i <  j \leq 7.
\end{equation*}
Thus, $\displaystyle\lim_{t \rightarrow  -\infty} R( g_t)=0$. Moreover, an standard computation shows that the Ricci tensor  $Ric(g_t)_{ij}=\sum_{k=1}^7R_{kijk}$ satisfies
\begin{equation*}
Ric(g_t)=-\frac{6 m^2}{1-4m^2t} \, g_t,
\end{equation*}
that is, $g_t$ is Einstein concluding the proof.
\end{proof}

For the rest of the Lie algebras $\frak{cp}_s^m$, we obtain the following explicit solutions:
\begin{theorem}\label{sol-flujo}
Let $S_s$ be a solvable Lie group with underlying Lie algebra $\frak{cp}_s^m$. The family of $\mathrm{G}_2$-structures given below is a solution for the Laplacian flow:
\begin{itemize}
  \item $\frak{cp}_2^m$: For $t\in (-\infty,\frac{3}{10m^2})$,
\begin{equation*}
  \varphi(t)=(1-\frac{10}{3}m^2t)^{\frac{11}{5}}\left(e^{127}-e^{236}\right)+
  (1-\frac{10}{3}m^2t)^{2}\left(e^{347}-e^{567}\right)+
  (1-\frac{10}{3}m^2t)^{\frac{12}{5}}\left(e^{135}-e^{146}-e^{245}\right).
\end{equation*}
\medskip
\item $\frak{cp}_3^m$:  For $t\in (-\infty,\frac{1}{3m^2})$,
\begin{equation*}
\varphi(t)=(1-3m^2t)^{\frac{7}{3}}(e^{127} -e^{236}-e^{245})+(1-3m^2t)^{2}(e^{347}+e^{567})+(1-3m^2t)^{\frac{5}{2}}(e^{135}-e^{146}).
\end{equation*}
\medskip
\item $\frak{cp}_4^m$:  For $t\in (-\infty,\frac{5}{14m^2})$,
\begin{equation*}
\varphi(t)=(1-\frac{14}{5}m^2t)^{\frac{17}{7}}(e^{127}-e^{146}-e^{236}-e^{245})+(1-\frac{14}{5}m^2t)^{2}(e^{347}+e^{567})+
(1-\frac{14}{5}m^2t)^{\frac{18}{7}}e^{135}.
\end{equation*}
\medskip
\item $\frak{cp}_5^m$:  For $t\in (-\infty,\frac{1}{3m^2})$,
\begin{equation*}
\varphi(t)=(1-3m^2t)^{\frac{4}{3}}(e^{127}-e^{146}-e^{245})+(1-3m^2t)^{2}(e^{347}+e^{567})+
(1-3m^2t)^{\frac{5}{2}}(e^{135}-e^{236}).
\end{equation*}
\medskip
\item $\frak{cp}_6^m$:  For $t\in (-\infty,\frac{3}{8m^2})$,
\begin{equation*}
\varphi(t)=(1-\frac{8}{3}m^2t)^{\frac{5}{2}}(e^{127} + e^{135}-e^{146}-e^{236}-e^{245})+(1-\frac{8}{3}m^2t)^{2}(e^{347}+e^{567}).
\end{equation*}\medskip
\item $\frak{cp}_7^m$:  For $t\in (-\infty,\frac{5}{14m^2})$,
\begin{equation*}
\varphi(t)=(1-\frac{14}{5}m^2t)^{\frac{15}{7}}(e^{127}+e^{347}+e^{567} - e^{236})+(1-\frac{14}{5}m^2t)^{\frac{18}{7}}(e^{135}-e^{146}-e^{245}).
\end{equation*}

\end{itemize}
\end{theorem}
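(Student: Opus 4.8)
The plan is to run, for each $s=2,\dots,7$, the argument that proves Theorem~\ref{Thm-cp1}; the only genuinely new feature is that the locally conformal parallel condition now forces nontrivial relations among the evolution functions. So first I would substitute the data of $\mathfrak{cp}^m_s$ recorded in Table~\ref{tabla_algebras} into the structure equations~\eqref{structure-eq-x} and into Proposition~\ref{prop-CP}; requiring $\varphi(t)\in\mathcal X_4$ in the form~\eqref{conditions-CP} then becomes the explicit list of identities among the products $f_{ijk}$ given there. Under those identities Proposition~\ref{prop-Lap} applies, and together with Remark~\ref{remark-coefs} it yields $\Delta_t\varphi(t)=\sum_{(i,j,k)\in A\cup B}\varepsilon(i,j,k)\,\Delta_{ijk}\,x^{ijk}$ with each $f_7^2(t)\,\Delta_{ijk}$ an explicit rational multiple of $m^2$, obtained by inserting Table~\ref{tabla_algebras} into~\eqref{laplacian2}. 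Consequently, exactly as in the proof of Theorem~\ref{Thm-cp1}, the evolution equation $\frac{d}{dt}\varphi(t)=\Delta_t\varphi(t)$ reduces to the system~\eqref{eq:evol_eq_expand1}, i.e.\ to the seven scalar equations $\frac{d}{dt}\ln f_{ijk}=\Delta_{ijk}$ for $(i,j,k)\in A\cup B$ (the remaining equations of~\eqref{eq:evol_eq_expand1} holding automatically, since by Proposition~\ref{prop-Lap} the form $\Delta_t\varphi(t)$ has no component outside $A\cup B$).

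The next step is to integrate this system. Coincidences among the constants $\Delta_{ijk}$ occur exactly for the triples grouped together in the statement, and Lemma~\ref{lem:equalities2}\,i) promotes each coincidence to an equality $f_{ijk}=f_{pqr}$; these, together with the relations of Proposition~\ref{prop-CP} and the fact that the seven triples of $A\cup B$ are the lines of the Fano plane (so that the corresponding $7\times7$ incidence matrix is invertible over $\mathbb Q$), determine every logarithmic derivative $f_i'/f_i$ as a rational multiple of $m^2/f_7^2$. The equation for $f_7$ alone is then separable and gives $f_7(t)^2=1+\kappa_s m^2 t$ for a negative rational $\kappa_s$ (using $f_7(0)=1$); substituting this back makes every remaining equation of the form $f_i'/f_i=b_i\,(1+\kappa_s m^2 t)'/(1+\kappa_s m^2 t)$, which integrates, again via $f_i(0)=1$, to $f_i(t)=(1+\kappa_s m^2 t)^{b_i}$ for rational exponents $b_i$ (with $b_7=\tfrac12$). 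Forming the products $f_{ijk}=f_if_jf_k$ then reproduces the six families displayed in the statement, with $1+\kappa_s m^2 t$ the linear factor occurring there and the displayed powers equal to the sums $b_i+b_j+b_k$. The maximal interval $I$ is the set where $1+\kappa_s m^2 t>0$, namely $(-\infty,-1/(\kappa_s m^2))$, so every solution is ancient.

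Two points then close the argument. First, since Proposition~\ref{prop-Lap} was invoked under the assumption $\varphi(t)\in\mathcal X_4$, one must verify that the explicit functions $f_i$ obtained above really do satisfy every relation of Proposition~\ref{prop-CP}; because the $f_i$ are concrete powers of $1+\kappa_s m^2 t$, in each case this is the one-line check that two rational exponents agree, and it simultaneously gives $d\varphi(t)=3m\,e^7\wedge\varphi(t)$ and $d\ast_t\varphi(t)=4m\,e^7\wedge\ast_t\varphi(t)$. Second, uniqueness of the solution with initial datum $\varphi_0$ follows as in Theorem~\ref{Thm-cp1}, because after the reduction the system is linear in the functions $\ln f_i$ with prescribed values at $t=0$.

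I expect the only real difficulty to be computational rather than conceptual: evaluating~\eqref{laplacian2} correctly for each of the six algebras while keeping careful track of the signs $\varepsilon(i,j,k)$, and then carrying out the elimination---equivalently, the inversion of the Fano incidence relations---that produces the exponents $b_i$. Everything afterwards is the routine integration of one separable and several linear first-order ODEs, exactly as in the proof of Theorem~\ref{Thm-cp1}. (The computation also exhibits $\varphi(t)$ as $\varphi_0$ with the coframe rescaled by $e^i\mapsto(1+\kappa_s m^2 t)^{b_i}e^i$, which is the self-similarity behind the Laplacian soliton property noted after the statement.)
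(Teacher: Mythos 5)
Your proposal is correct and rests on the same machinery as the paper's proof (Propositions~\ref{prop-CP} and~\ref{prop-Lap}, Remark~\ref{remark-coefs}, and the reduction of the evolution equation to the system~\eqref{eq:evol_eq_expand1}), but it organizes the final ODE step genuinely differently. The paper posits from the outset the power-law ansatz $f^s_i(t)=(1-\alpha_s m^2t)^{\beta^s_i}$ of~\eqref{funcionesG}, observes that consistency of the resulting algebraic system forces $\beta_7=\tfrac12$, and then solves for $\alpha_s,\beta^s_1,\dots,\beta^s_6$ the linear system obtained by combining the seven flow equations with the LCP relations~\eqref{betaLCP}, recording the values in Table~\ref{parametros}. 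You instead keep the $f_i$ general, use Remark~\ref{remark-coefs} to write each flow equation as $\tfrac{d}{dt}\ln f_{ijk}=c_{ijk}m^2/f_7^2$ with explicit rational constants $c_{ijk}$, and invert the (nonsingular) incidence matrix of the Fano plane formed by the triples of $A\cup B$ to isolate each $f_i'/f_i$; the $f_7$-equation is then separable and the power-law form with $b_7=\tfrac12$ is \emph{derived} rather than assumed. Your route buys slightly more---uniqueness of the solution within the whole ansatz class~\eqref{eqn:expression_sol}, which the paper only asserts for $S_1$---at the cost of the a posteriori check, which you correctly flag as indispensable, that the derived exponents satisfy the relations of Proposition~\ref{prop-CP}, since Proposition~\ref{prop-Lap} was invoked under that hypothesis. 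The remaining content is in both cases the same case-by-case evaluation of~\eqref{laplacian2} from Table~\ref{tabla_algebras}, which you acknowledge but do not carry out.
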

\begin{proof}
  Inspired by the solution to the Laplacian flow on the solvable Lie group $S_1$ obtained in Theorem~\ref{Thm-cp1} we will consider families of G$_2$-structures
  of type~\eqref{eqn:expression_sol} on the rest of Lie groups $S_s$ where the evolution functions $f^s_i(t)$ are specifically given by:
  \begin{equation}\label{funcionesG}
  f^s_i(t)=(1-\alpha_s\,m^2 \, t)^{\beta_i^s},\quad \text{for any }1\leq i\leq 7,
  \end{equation}
  hence, for each Lie algebra $\frak{cp}_s^m$, the unknowns are now $\alpha_s\in\mathbb R^*$ and $\beta^s_i\in\mathbb R$,  $i = 1, \dots, 7$.
  Now, Proposition~\ref{prop-CP} states necessary and sufficient conditions for the property being LCP to be preserved during the flow, which under
  the assumption~\eqref{funcionesG} transform into relations involving the $\beta_i$ coefficients as follows:
  \begin{equation}\label{betaLCP}
  \begin{array}{l}
     \mathfrak{cp}^m_2: \beta_1+\beta_7=\beta_3+\beta_6;\\[5pt]
     \mathfrak{cp}^m_3: \beta_1+\beta_7=\beta_3+\beta_6=\beta_4+\beta_5;\\[5pt]
     \mathfrak{cp}^m_4: \beta_1+\beta_7=\beta_3+\beta_6=\beta_4+\beta_5,\quad \beta_2+\beta_7=\beta_4+\beta_6;\\[5pt]
     \mathfrak{cp}^m_5: \beta_1+\beta_7=\beta_4+\beta_5,\quad  \beta_2+\beta_7=\beta_4+\beta_6;\\[5pt]
     \mathfrak{cp}^m_6: \beta_1+\beta_7=\beta_3+\beta_6=\beta_4+\beta_5,\quad  \beta_2+\beta_7=\beta_3+\beta_5=\beta_4+\beta_6;\\[5pt]
     \mathfrak{cp}^m_7: \beta_1+\beta_7=\beta_3+\beta_6,\quad \beta_2+\beta_3=\beta_5+\beta_7,\quad  \beta_2+\beta_6=\beta_4+\beta_7.
  \end{array}
  \end{equation}

In addition, system \eqref{eq:evol_eq_expand1}  reduces to
  \begin{equation*}
 \Delta_{ijk} =  \frac{(f_{ijk})'}{f_{ijk}}=\frac{-\alpha m^2(\beta_i+\beta_j+\beta_k)}{(1-\alpha m^2 t)},
  \end{equation*}
 where the unknowns are $\alpha$ and $\beta_1,\ldots, \beta_7$.

Explicitly, taking the $\Delta_{ijk}$ coefficients given in Proposition~\ref{prop-Lap}:
\begin{eqnarray*}
\beta_1+\beta_2+\beta_7&=&-\frac{(1-\alpha m^2t)^{1-2\beta_7}}{\alpha m}\left[3(4m + \eta_3 + \eta_4 + \eta_5 + \eta_6) + 4(\eta_1 + \eta_2) \right],\\[5pt]
\beta_3+\beta_4+\beta_7&=&-\frac{(1-\alpha m^2t)^{1-2\beta_7}}{\alpha m}\left[\dfrac{6m}{5}\,\delta_7 + 4 \left(\eta_3 + \eta_4\right) \right],\\[5pt]
\beta_5+\beta_6+\beta_7&=&-\frac{(1-\alpha m^2t)^{1-2\beta_7}}{\alpha m}\left[ \dfrac{6m}{5}\,\delta_7 + 4 \left(\eta_5 + \eta_6\right) \right],\\[5pt]
\beta_1+\beta_3+\beta_5&=&-\frac{(1-\alpha m^2t)^{1-2\beta_7}}{\alpha m}\left[ \dfrac{4m}{3}\,\delta_6 + 3\left(\eta_2+ \eta_4+\eta_6\right)\right],\\[5pt]
\beta_1+\beta_4+\beta_6&=&-\frac{(1-\alpha m^2t)^{1-2\beta_7}}{\alpha m}\left[\dfrac{8m}{5}\,\delta_4 + 2m\,\delta_5 + \dfrac{4m}{3}\,\delta_6  + 3\left(\eta_2+ \eta_3+\eta_5\right)\right],\\[5pt]
\beta_2+\beta_3+\beta_6&=&-\frac{(1-\alpha m^2t)^{1-2\beta_7}}{\alpha m}\left[\dfrac{8m}{3}\,\delta_2 +  2m\,\delta_3 + \dfrac{8m}{5}\,\delta_4 + \dfrac{4m}{3}\,\delta_6 + \dfrac{24 m}{5}\,\delta_7 + 3\left(\eta_1+ \eta_4+\eta_5\right)\right],\\[5pt]
\beta_2+\beta_4+\beta_5&=&-\frac{(1-\alpha m^2t)^{1-2\beta_7}}{\alpha m}\left[2m\,\delta_3 + \dfrac{8m}{5}\,\delta_4 + 2m\,\delta_5 + \dfrac{4m}{3}\,\delta_6 - 3\left(\eta_1+ \eta_3+\eta_6\right)\right].
\end{eqnarray*}
Clearly, the latter system admits solution only if $\beta_7=\frac12$. Now, for each Lie algebra $\frak{cp}_s^m$ the values of $\alpha$ and $\beta_1,\ldots,\beta_6$ result of solving
the system that yields substituting above the concrete values of $\eta_1,\ldots,\eta_6$ listed in Table~\ref{tabla_algebras} joint with the corresponding relations~\eqref{betaLCP} involving the
preservation of the LCP condition during the flow. The values of the solution parameters are listed in Table~\ref{parametros} and the resulting solutions $\varphi(t)$ are picked in the
statement of the theorem.
\begin{table}[h!]
\renewcommand{\arraystretch}{1.8}
\begin{center}
\begin{tabular}{|c|c|c||c|c|c|}
\hline
Lie group& $\alpha$ & $(\beta_{1},\ldots, \beta_7)$&Lie group& $\alpha$ & $(\beta_{1},\ldots, \beta_7)$\\
\hline
$S_2$&$\frac{10}{3}$&$\big(\frac{9}{10}, \frac45,\frac{7}{10},\frac45,\frac45,\frac{7}{10},\frac12\big)$&$S_5$&$ 3$&$\big(\frac{11}{12}, \frac{11}{12},\frac56,\frac23,\frac34,\frac34,\frac12\big)$\\
\hline
$S_3$&$ 3$&$\big(1, \frac56,\frac34,\frac34,\frac34,\frac34,\frac12\big)$&$S_6$&$ \frac{8}{3}$&$\big(1, 1,\frac34,\frac34,\frac34,\frac34,\frac12\big)$\\
\hline
$S_4$&$ \frac{14}{5}$&$\big(1, \frac{13}{14},\frac{11}{14},\frac57,\frac{11}{14},\frac57,\frac12\big)$&$S_7$&$ \frac{14}{5}$&$\big(\frac{13}{14}, \frac{10}{14},\frac{10}{14},\frac{13}{14},\frac{13}{14},\frac{10}{14},\frac12\big)$\\
\hline
\end{tabular}
\end{center}
\caption{Defining parameters of the functions $f_i(t)=(1-\alpha m^2t)^{\beta_i}$. }\label{parametros}
\end{table}
\end{proof}

\begin{remark}\label{RiccF}

The non-vanishing elements (up to symmetries) of the Riemannian curvature of $g_t$ are tabulated
 in Tables~\ref{Tabla_Curvatura} and~\ref{Table3} in Appendix, where the coefficients $C_1,\,C_2,\,C_3,\,C_4$ are given in terms of the parameter $\alpha$ by:
$$C_1=\frac{-6m^2}{1-\alpha\, m^2 t},\quad C_2 = \frac{-m^2}{3}\left(\frac{1}{1-\alpha\, m^2 t}\right),\quad C_3 = \frac{-m^2}{4}\left(\frac{1}{1-\alpha\, m^2 t}\right),\quad C_4 = \frac{-m^2}{5}\left(\frac{1}{1-\alpha\, m^2 t}\right).$$
A direct computation of the Ricci tensors in the orthonormal basis $\{x_i(t)\}_{i=1}^7$ shows that:
\begin{equation*}
\begin{array}{l}
  \frak{cp}_2^m : \text{Ric}(g_t)=C_2\, diag(22, 17, 12, 17, 17, 12, 17), \\[5pt]
  \frak{cp}_3^m : \text{Ric}(g_t)=C_3\,  diag(32, 22, 17, 17, 17, 17, 22), \\[5pt]
  \frak{cp}_4^m : \text{Ric}(g_t)=C_4\,  diag(37, 32, 22, 17, 22, 17, 27), \\[5pt]
  \frak{cp}_5^m : \text{Ric}(g_t)=C_3\, diag(27, 27, 22, 12, 17, 17, 22), \\[5pt]
  \frak{cp}_6^m : \text{Ric}(g_t)=C_2\,  diag(21, 21, 11, 11, 11, 11, 16), \\[5pt]
  \frak{cp}_7^m : \text{Ric}(g_t)=C_4\, diag(32, 17, 17, 32, 32, 17, 27),
  \end{array}
\end{equation*}
Hence, unlike the case of $S_1$, none of the metrics listed in Theorem~\ref{sol-flujo} obtained as solutions of the Laplacian LCP-flow for $S_s$, $s=2,\ldots, 7$, are Einstein.
\end{remark}

\begin{proposition} The $\mathrm G_2$-structures obtained in Theorem~\ref{Thm-cp1} and~\ref{sol-flujo} are Laplacian solitons of shrinking type.
\end{proposition}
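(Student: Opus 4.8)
The plan is to realize each of the solutions $\varphi(t)$ of Theorems~\ref{Thm-cp1} and~\ref{sol-flujo} as a \emph{self-similar} solution of the Laplacian flow, that is, to write $\varphi(t)=c(t)\,\Phi_t^*\varphi_0$ for a scalar function $c(t)$ and a one-parameter family $\Phi_t$ of automorphisms of the Lie group $S_s$, and then to read off the soliton identity $\Delta_{\varphi_0}\varphi_0=\lambda\,\varphi_0+\mathcal L_X\varphi_0$ by differentiating at $t=0$. Recall that a $\mathrm G_2$-structure satisfying such an identity is a Laplacian soliton of shrinking, steady or expanding type according to whether $\lambda<0$, $\lambda=0$ or $\lambda>0$, so it suffices to produce the decomposition with $\lambda<0$. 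The key structural feature of the solutions found above is that, by \eqref{funcionesG}, the evolution functions are $f_i(t)=(1-\alpha_s m^2 t)^{\beta_i^s}$ with $\beta_7^s=\tfrac12$, so that $f_7^2(t)=1-\alpha_s m^2 t$; this is exactly what makes the rescaled coframe close up into an automorphism.

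For each $s$ I would consider the diagonal endomorphism $A_t$ of $\frs^*=(\mathfrak{cp}_s^m)^*$ given by $A_t e^i=\dfrac{f_i(t)}{f_7(t)}\,e^i$ for $1\le i\le 6$ and $A_t e^7=e^7$, together with $c(t)=f_7(t)^3$. The first step is to check that $A_t$ is a Lie algebra automorphism, equivalently that its exterior extension commutes with the differential in \eqref{structure-eq-x}: the terms $\eta_k\,x^{k7}$ force the $e^7$-entry of $A_t$ to be $1$ (here one uses that $D$ is non-singular, so every $\eta_k\neq0$), while each non-zero structure constant $c^k_{pq}$ forces the relation $\dfrac{f_p f_q}{f_7^2}=\dfrac{f_k}{f_7}$, i.e. $f_{pq}(t)=f_{k7}(t)$. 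Comparing with Table~\ref{tabla_algebras}, one sees that these are precisely the relations imposed by Proposition~\ref{prop-CP} to preserve the locally conformal parallel condition, i.e. the relations \eqref{betaLCP} that the parameters $\beta_i^s$ of Theorem~\ref{sol-flujo} were chosen to satisfy (and that hold vacuously for $\mathfrak{cp}_1^m$). Equivalently, the $t$-independent diagonal matrix $D_s=\mathrm{diag}(\beta_1^s-\tfrac12,\dots,\beta_6^s-\tfrac12,0)$ is a derivation of $\mathfrak{cp}_s^m$ and $A_t$ is the dual of $\exp(\log(1-\alpha_s m^2 t)\,D_s)$. I expect this verification --- matching the ``being a derivation'' constraints against the ``preserving the LCP condition'' constraints case by case --- to be the only real content of the argument; the rest is bookkeeping.

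Since $S_s$ is simply connected, $D_s$ integrates to a one-parameter family of automorphisms $\Phi_t$ of $S_s$ with $\Phi_t^* e^{ijk}=\dfrac{f_i f_j f_k}{f_7^3}\,e^{ijk}$, and then, using \eqref{eqn:expression_sol},
\[
c(t)\,\Phi_t^*\varphi_0=f_7(t)^3\!\!\!\sum_{(i,j,k)\in A\cup B}\!\!\varepsilon(i,j,k)\,\frac{f_i f_j f_k}{f_7^3}\,e^{ijk}=\!\!\!\sum_{(i,j,k)\in A\cup B}\!\!\varepsilon(i,j,k)\,f_{ijk}(t)\,e^{ijk}=\varphi(t).
\]
Differentiating $\varphi(t)=f_7(t)^3\,\Phi_t^*\varphi_0$ at $t=0$, using $\varphi(0)=\varphi_0$, $\tfrac{d}{dt}\big|_0\varphi(t)=\Delta_{\varphi_0}\varphi_0$, $f_7(0)=1$ and $f_7'(0)=-\tfrac12\alpha_s m^2$, and setting $X=\tfrac{d}{dt}\big|_0\Phi_t$ (a complete vector field on $S_s$), I obtain
\[
\Delta_{\varphi_0}\varphi_0=\Big(\tfrac{d}{dt}\big|_0 f_7^3\Big)\varphi_0+\mathcal L_X\varphi_0=-\tfrac32\,\alpha_s m^2\,\varphi_0+\mathcal L_X\varphi_0 .
\]
Hence each $\varphi_0$ is a Laplacian soliton with $\lambda=\lambda_s=-\tfrac32\alpha_s m^2<0$ (as $\alpha_s>0$, $m\neq0$), i.e. of shrinking type.

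As a consistency check, the finite extinction time of a shrinking self-similar solution, $T=-\tfrac{3}{2\lambda_s}=\tfrac{1}{\alpha_s m^2}$, matches the right endpoints of the intervals $I$ in Theorems~\ref{Thm-cp1} and~\ref{sol-flujo}. If one prefers to bypass the automorphisms, the same conclusion follows by a direct computation of $\mathcal L_X\varphi_0$ from the derivation $D_s$: one has $\mathcal L_X e^i=-\alpha_s m^2(\beta_i^s-\tfrac12)e^i$, so summing over the monomials $e^{ijk}$, $(i,j,k)\in A\cup B$, and combining with $\Delta_{\varphi_0}\varphi_0=\sum\varepsilon(i,j,k)\,f_{ijk}'(0)\,e^{ijk}=-\alpha_s m^2\sum\varepsilon(i,j,k)(\beta_i^s+\beta_j^s+\beta_k^s)\,e^{ijk}$ (which is already contained in Proposition~\ref{prop-Lap} and its use in the proof of Theorem~\ref{sol-flujo}) yields the displayed soliton identity with $\lambda_s=-\tfrac32\alpha_s m^2$. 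In either form the main obstacle is the same: confirming that the diagonal map built from the ratios $f_i/f_7$ is genuinely an automorphism, which I would do by noting that the required relations coincide exactly with \eqref{betaLCP}.
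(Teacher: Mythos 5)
Your argument is correct, and it reaches the same constants as the paper ($\lambda=-\tfrac32\alpha_s m^2$ agrees at $t=0$ with the paper's $-m^2\bigl(6\delta_1+5\delta_2+\tfrac92\delta_3+\tfrac{21}{5}\delta_4+\tfrac92\delta_5+4\delta_6+\tfrac{21}{5}\delta_7\bigr)/f_7^2(t)$ for every $s$), but by a genuinely different route. The paper's proof is a direct verification: it picks the explicit left-invariant vector field $X=-\tfrac{m}{f_7(t)}X_7$, computes $\mathcal L_X\varphi(t)$ algebraically from the structure equations~\eqref{structure-eq-x}, and checks via Proposition~\ref{prop-Lap} and Table~\ref{tabla_algebras} that $\Delta\varphi(t)-\mathcal L_X\varphi(t)$ is a negative multiple of $\varphi(t)$ for \emph{every} $t$ at once. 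You instead exhibit the solutions as self-similar, $\varphi(t)=f_7(t)^3\,\Phi_t^*\varphi_0$, and your key step --- that the diagonal rescaling $e^i\mapsto (f_i/f_7)e^i$, $e^7\mapsto e^7$ is a Lie algebra automorphism precisely because the relations $f_{pq}=f_{k7}$ attached to the nonzero structure constants coincide with the LCP-preservation relations~\eqref{betaLCP} --- does check out against Table~\ref{tabla_algebras} in all seven cases. Your soliton vector field (the generator of $\Phi_t$, a derivation-type vector field) is different from the paper's left-invariant one, which is harmless since only $\mathcal L_X\varphi$ enters the soliton equation. What your approach buys is conceptual: it explains \emph{why} the solutions are solitons, ties $\lambda$ cleanly to the exponent $\alpha_s$ and to the extinction time, and exposes the coincidence between the automorphism and LCP constraints; what the paper's buys is brevity and an explicit invariant $X$ valid uniformly in $t$. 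One small point to tidy: as written you establish the soliton identity only for $\varphi_0=\varphi(0)$, whereas the proposition concerns the whole family $\varphi(t)$; this is immediate from your setup (recenter the self-similarity at $t_0$, writing $\varphi(t)=\tfrac{c(t)}{c(t_0)}(\Phi_{t_0}^{-1}\circ\Phi_t)^*\varphi(t_0)$ and differentiating at $t=t_0$, which yields $\lambda(t_0)=-\tfrac32\alpha_s m^2/f_7^2(t_0)$, still negative), or from your ``direct computation'' variant together with Remark~\ref{remark-coefs}, but it should be said.
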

\begin{proof}
Recall that a $\mathrm G_2$-structure $\varphi$ is called \emph{Laplacian soliton} if it satisfies the equation $$\Delta\varphi= \lambda \varphi+\mathcal L_X \varphi,$$ for some real number $\lambda$ and some vector field $X$.  Depending on the sign of $\lambda$, Laplacian solitons are called \emph{shrinking} (if $\lambda<0$); \emph{steady} (if $\lambda=0$) or \emph{expanding} (if $\lambda>0$).

In the left-invariant setting, the Lie derivative of a 3-form $\Omega$ can be computed following the formula:
\begin{equation*}
\mathcal L_X \Omega(Y_1,Y_2,Y_3)=-\Omega([X,Y_1],Y_2,Y_3)-\Omega(Y_1,[X,Y_2],Y_3)-
\Omega(Y_1,Y_2,[X,Y_3]),
\end{equation*}
where $Y_1,\,Y_2,\,Y_3$ are invariant vector fields.

In our case, consider the left-invariant vector field $X = -\frac{m}{f_7(t)}X_7$, where $X_7$ denotes the dual of the
1-form $x^7$. Then, taking into account the generic structure equations~\eqref{structure-eq-x} and the formula above we get an expression of
the Lie derivative $\mathcal L_X \varphi(t)$ in  terms of the defining parameters $\eta_1,\dots,\eta_6$ of the Lie algebras:
\begin{equation}\label{Lie-dev}
\begin{array}{lll}
\mathcal L_X \varphi(t)&=&\frac{m}{f_7^2(t)}\left[(\eta_1+\eta_2)\,x^{127} + (\eta_3+\eta_4)\,x^{347} + (\eta_5+\eta_6)\,x^{567}+(\eta_1+\eta_3+\eta_5)\,x^{135}\right.\\[7pt]
&&\qquad \left.-(\eta_1+\eta_4+\eta_6)\,x^{146}-(\eta_2+\eta_3+\eta_6)\,x^{236}-(\eta_2+\eta_4+\eta_5)\,x^{245}\right].
\end{array}
\end{equation}

Now, if we compute $\Delta\varphi(t)-\mathcal L_X \varphi(t)$, using~\eqref{Lie-dev}, Proposition~\ref{prop-Lap} and Table~\ref{tabla_algebras}, we obtain:
$$
\Delta\varphi(t)-\mathcal L_X \varphi(t) = -\frac{m^2}{f^2_7(t)}\varphi(t)\left(6\,\delta_1 + 5\,\delta_2 + \frac92\,\delta_3+\frac{21}{5}\,\delta_4 + \frac92\,\delta_5 + 4\,\delta_6 + \frac{21}{5}\,\delta_7\right),
$$ so they are Laplacian solitons.  Moreover, since the constant $\lambda_i$ is negative in all cases, the solitons are of shrinking type.
\end{proof}

\section{Long time solutions of the Laplacian coflow of an LCP $\mathrm{G}_2$-structure}

In this section we consider the Laplacian coflow~\eqref{LCP-coflow-eq}. More concretely, we seek explicit solutions to the coflow on the set of solvable
Lie groups endowed with a left-invariant LCP structure listed in Proposition~\ref{algebrasFR}.

We look for solutions within the families of invariant $\mathrm{G}_2$-structures $\varphi(t)$ of type~\eqref{eqn:expression_sol} depending on some
unknown functions $f_i(t)$, $1\leq i\leq 7$, in the same terms as it has been set in the paper. Then, at any $t\in I$, the
4-form $\psi(t)=\ast_t\varphi(t)$ involved in the evolution equation of the coflow can be expressed in terms of the adapted basis $\{x^i(t)\}_{i=1}^7$ as:
\begin{equation*}\label{codeformation}
  \begin{array}{lll}
    \psi(t)&=&x^{3456}+x^{1256}+x^{1234}-x^{2467}+x^{2357}+x^{1457}+x^{1367}\\[7pt]

    &=&f_{3456}(t)e^{3456}+f_{1256}(t)e^{1256}+f_{1234}(t)e^{1234}-f_{2467}(t)e^{2467}\\[7pt]
    &&+f_{2357}(t)e^{2357}+f_{1457}(t)e^{1457}+f_{1367}(t)e^{1367}.
  \end{array}
\end{equation*}
and the Laplacian of the 4-form $\psi(t)$:
\begin{equation}\label{eq:delta4ijk}
\Delta_t\psi(t)=\sum_{(l,m,n,o)\in \mathcal{K}\cup\{(2,4,6,7)\}}\varepsilon(l,m,n,o)\Delta_{lmno}\,x^{lmno}+
\sum_{(l,m,n,o)\notin \mathcal{K}\cup\{(2,4,6,7)\}}\Delta_{lmno}x^{lmno}.
\end{equation}
where $\mathcal K = \{(1,2,3,4), (1,2,5,6), (1,3,6,7), (1,4,5,7), (2,3,5,7), (3,4,5,6)\}$. The symbols $\varepsilon(l,m,n,o)$ are defined as:
\begin{equation*}
\varepsilon(l,m,n,o)=\begin{cases}
1&\text{ if }(l,m,n,o)\in  \mathcal{K},\\
-1&\text{ if }(l,m,n,o)=(2,4,6,7);\\
\end{cases}
\end{equation*}
Therefore, by a similar a similar argument as in the flow case, the first equation of the Laplacian LCP-coflow~\eqref{LCP-coflow-eq} becomes the system of differential equations:
\begin{gather*}\label{coeq-flujo}
\begin{cases}
\begin{array}{ll}
\Delta_{lmno}=\frac{-(f_{lmno})'}{f_{lmno}}, \qquad\,\quad \,\, &\text{ if }\,  (l,m,n,o)\in\mathcal{K}\cup\{(2,4,6,7)\},\\
\Delta_{lmno}=0, \qquad\,\quad \,\, &\text{ otherwise.}
\end{array}
\end{cases}
\end{gather*}

Concerning the preservation of the LCP property of $\ast_t\psi(t)$, functions $f_i(t)$ must satisfy the relations stated in Proposition~\ref{prop-CP}.
Now, in the next result we show that there is a correspondence between solutions of the ansatz type~\eqref{eqn:expression_sol} of the Laplacian LCP-flow and
the Laplacian LCP-coflow assuming that the functions are of potential type.

\begin{theorem}\label{Thm:flujo-coflujo}
Let $\varphi(t)$ and $\widetilde\varphi(t)$ be two different families of $\mathrm{G}_2$-structures on $\mathfrak{cp}^m_s$ with $s=1,\ldots, 7$, given by~\eqref{eqn:expression_sol}, where
\begin{equation*}
f^s_i(t)=(1 -\alpha\,m^2 t)^{\beta_i}\qquad\text{and}\qquad \widetilde{f^s_i}(t)=(1-\gamma\,  m^2 t)^{\delta_i}, \quad \text{for}\quad i=1,\ldots, 7,
\end{equation*}
and $\beta_7=\frac12$ and $\delta_7=\frac12$. If the defining parameters of the functions $f^s_i(t)$ and $\widetilde{f^s_i}(t)$ are related by:

\begin{equation}\label{relation}
  \gamma = \alpha\left( \frac{2-\sum_{i=1}^{7}\beta_i}{2}\right),\quad \text{and}\quad
\delta_i = \frac12+\frac{1-2 \beta_i}{-2+\sum_{j=1}^{7}\beta_j}  \quad \text{ with } \quad i\in \{1, \dots, 7\},
\end{equation}
then:

\begin{enumerate}
\item[(i)] $\varphi(t)$ is LCP if and only if $\widetilde \varphi(t)$ is LCP.
\item[(ii)] $\varphi(t)$ solves the Laplacian LCP-flow~\eqref{LCP-flow-eq} if and only if $\widetilde\psi(t) = \widetilde\ast_{t} \widetilde\varphi(t)$ solves the Laplacian LCP-coflow~\eqref{LCP-coflow-eq}.
\end{enumerate}
\end{theorem}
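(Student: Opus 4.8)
The plan is to translate both evolution equations, and the LCP condition, into algebraic relations among the exponents $\beta_i$ and $\delta_i$, and then to check that the prescription~\eqref{relation} matches those two sets of relations. Write $\Sigma=\sum_{j=1}^{7}\beta_j$ and $D=\Sigma-2$ (necessarily $D\neq0$, as~\eqref{relation} would be meaningless otherwise); then~\eqref{relation} reads $\gamma=-\tfrac{\alpha D}{2}$ and $\delta_i=\tfrac12+\frac{1-2\beta_i}{D}$, which is consistent with $\delta_7=\tfrac12$ since $\beta_7=\tfrac12$. Because $\beta_7=\delta_7=\tfrac12$ we have $f_7^2(t)=1-\alpha m^2t$ and $\widetilde f_7^{\,2}(t)=1-\gamma m^2t$, so everything below will carry the common time‑factor $(1-\alpha m^2t)^{-1}$ (resp. $(1-\gamma m^2t)^{-1}$). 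For~(i): by Proposition~\ref{prop-CP}, for families of the ansatz form~\eqref{eqn:expression_sol} with potential functions the LCP condition holds for all $t$ exactly when the exponents satisfy the list~\eqref{betaLCP}, each equation of which has the shape $\beta_a+\beta_b=\beta_c+\beta_d$; since $i\mapsto\delta_i$ is affine, $\delta_a+\delta_b=1+\frac{2-2(\beta_a+\beta_b)}{D}$ is one and the same affine function of $\beta_a+\beta_b$ for every pair, so $\beta_a+\beta_b=\beta_c+\beta_d\iff\delta_a+\delta_b=\delta_c+\delta_d$, and $\varphi(t)$ satisfies~\eqref{betaLCP} if and only if $\widetilde\varphi(t)$ does. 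This is~(i).

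Next I would reduce the flow equation. Assuming $\varphi(t)$ is LCP, Proposition~\ref{prop-Lap} and Remark~\ref{remark-coefs} give $\Delta_t\varphi(t)=\sum_{(i,j,k)\in A\cup B}\varepsilon(i,j,k)\,\Delta_{ijk}\,x^{ijk}$ with $\Delta_{ijk}=\frac{m^2R_{ijk}}{1-\alpha m^2t}$ for a constant $R_{ijk}$ depending only on $\mathfrak{cp}^m_s$; since $\frac{(f_{ijk})'}{f_{ijk}}=\frac{-\alpha m^2(\beta_i+\beta_j+\beta_k)}{1-\alpha m^2t}$ and the equations of~\eqref{eq:evol_eq_expand1} outside $A\cup B$ are automatic (again by Proposition~\ref{prop-Lap}), $\varphi(t)$ solves the Laplacian LCP-flow~\eqref{LCP-flow-eq} if and only if it is LCP and
\[
R_{ijk}=-\alpha(\beta_i+\beta_j+\beta_k)\qquad\text{for every }(i,j,k)\in A\cup B.
\]

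For the coflow: by~(i), $\widetilde\varphi(t)$ is LCP, so $\widetilde\ast_t\widetilde\psi(t)=\widetilde\varphi(t)\in\mathcal X_4$ and Proposition~\ref{prop-Lap} together with Remark~\ref{remark-coefs} give $\widetilde\Delta_t\widetilde\varphi(t)=\sum_{(i,j,k)\in A\cup B}\varepsilon(i,j,k)\frac{m^2R_{ijk}}{1-\gamma m^2t}\widetilde x^{ijk}$ with the \emph{same} $R_{ijk}$. As the Hodge Laplacian commutes with the Hodge star, $\widetilde\Delta_t\widetilde\psi(t)=\widetilde\ast_t\widetilde\Delta_t\widetilde\varphi(t)$; complementation is a bijection $A\cup B\to\mathcal K\cup\{(2,4,6,7)\}$, and reading off from $\widetilde\psi(t)=\widetilde\ast_t\widetilde\varphi(t)=\sum\varepsilon(l,m,n,o)\widetilde x^{lmno}$ that $\widetilde\ast_t\widetilde x^{ijk}=\varepsilon(i,j,k)\,\varepsilon\big(\overline{(i,j,k)}\big)\,\widetilde x^{\overline{(i,j,k)}}$ (writing $\overline{(i,j,k)}$ for the complement), one gets that the coefficient of $\widetilde x^{\overline{(i,j,k)}}$ in $\widetilde\Delta_t\widetilde\psi(t)$ equals $\varepsilon\big(\overline{(i,j,k)}\big)\frac{m^2R_{ijk}}{1-\gamma m^2t}$, so that the quadruples outside $\mathcal K\cup\{(2,4,6,7)\}$ are again automatic. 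Combining this with $\frac{(\widetilde f_{lmno})'}{\widetilde f_{lmno}}=\frac{-\gamma m^2(\delta_l+\delta_m+\delta_n+\delta_o)}{1-\gamma m^2t}$, the form $\widetilde\psi(t)$ solves the Laplacian LCP-coflow~\eqref{LCP-coflow-eq} if and only if $\widetilde\varphi(t)$ is LCP and, for every complementary pair $(i,j,k)\leftrightarrow(l,m,n,o)$,
\[
R_{ijk}=\gamma(\delta_l+\delta_m+\delta_n+\delta_o).
\]

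It remains to see that~\eqref{relation} makes these two systems coincide, which is immediate: if $(l,m,n,o)=\overline{(i,j,k)}$ then $\beta_l+\beta_m+\beta_n+\beta_o=\Sigma-(\beta_i+\beta_j+\beta_k)$, so, using $4-2\Sigma=-2D$,
\[
\delta_l+\delta_m+\delta_n+\delta_o=2+\frac{4-2\big(\Sigma-(\beta_i+\beta_j+\beta_k)\big)}{D}=\frac{2(\beta_i+\beta_j+\beta_k)}{D},
\]
and hence $\gamma(\delta_l+\delta_m+\delta_n+\delta_o)=-\frac{\alpha D}{2}\cdot\frac{2(\beta_i+\beta_j+\beta_k)}{D}=-\alpha(\beta_i+\beta_j+\beta_k)$. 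Thus the coflow system is literally the flow system, and together with~(i) this yields both directions of~(ii). The step I expect to require the most care is the sign bookkeeping for $\widetilde\ast_t\widetilde x^{ijk}$ — i.e. verifying that complementation really carries $A\cup B$ onto $\mathcal K\cup\{(2,4,6,7)\}$ with the $\varepsilon$‑signs matching, so that the coefficients of $\widetilde\Delta_t\widetilde\psi(t)$ come out exactly in the form~\eqref{eq:delta4ijk}; the remaining manipulations are the elementary identities above.
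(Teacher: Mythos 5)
Your proof is correct and follows essentially the same route as the paper: both hinge on the identity $\gamma(\delta_l+\delta_m+\delta_n+\delta_o)=-\alpha(\beta_i+\beta_j+\beta_k)$ for complementary index sets, the commutation of the Hodge star with the Laplacian, and the function-independence of $f_7^2(t)\Delta_{ijk}$ noted in Remark~\ref{remark-coefs}. Your treatment of part (i) via the affine dependence of $\delta_i$ on $\beta_i$ is a slightly more direct packaging of the paper's chain of identities (which instead manipulates the complementary-index relation and checks $\gamma\neq0$ from the explicit $\alpha$'s), and your explicit sign bookkeeping for $\widetilde\ast_t\widetilde x^{ijk}$ fills in a step the paper leaves implicit.
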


\begin{proof}
 We denote by $(l,m,n,o) \in \mathcal{K}$ the set of complementary indexes to $(i,j,k) \in A\cup B$ (see page~\pageref{eq:delta3ijk}), i.e. $(l,m,n,o)=\widehat{(i,j,k)} = (1,\dots, \hat{i}, \dots, \hat{j}, \dots, \hat{k}, \dots, 7 )$. With this notation, \eqref{relation} implies
\begin{equation}\label{relacion-importante}
\gamma \, (\delta_l + \delta_m + \delta_n + \delta_o) = -\alpha \, (\beta_i+\beta_j+\beta_k),
\end{equation}
for all $(l,m,n,o) \in \mathcal{K}$ and $(l,m,n,o) = \widehat{(i,j,k)}$.

Now we prove the two statements of the theorem.

\smallskip

\noindent (i) Let us consider two pair of indexes $(i_1,j_1,k_1), (i_2,j_2,k_2) \in A\cup B$ such that they have a common index, let us say $k_1=k_2=k$. Under this hypothesis and making use of~\eqref{relacion-importante}
the following identities hold:
\begin{eqnarray*}
\gamma (\delta_1 + \ldots + \delta_7) &= & \gamma (\delta_1 + \ldots + \delta_7)\\
\gamma(\delta_{i_1} + \delta_{j_1} + \delta_{k}) + \gamma (\delta_{l_1}+ \delta_{m_1} + \delta_{n_1} + \delta_{o_1})  &= & \gamma(\delta_{i_2} + \delta_{j_2} + \delta_{k}) + \gamma (\delta_{l_2}+ \delta_{m_2} + \delta_{n_2} + \delta_{o_2})\\
\gamma(\delta_{i_1} + \delta_{j_1} + \delta_{k}) - \alpha (\beta_{i_1}+ \beta_{j_1} + \beta_{k})  &= & \gamma(\delta_{i_2} + \delta_{j_2} + \delta_{k}) - \alpha (\beta_{i_2}+ \beta_{j_2} + \beta_{k})\\
\gamma[(\delta_{i_2} + \delta_{j_2}) - (\delta_{i_1} + \delta_{j_1})]  &=& -\alpha[(\beta_{i_1} + \beta_{j_1}) - (\beta_{i_2} + \beta_{j_2})].
\end{eqnarray*}
We observe that the relations~\eqref{betaLCP} concerning the preservation  of the LCP property are always of the form:
\begin{equation*}
\beta_{i_1} + \beta_{j_1} = \beta_{i_2} + \beta_{j_2},
\end{equation*}
for a pair of indexes satisfying that $(i_1,j_1,k)  \in A,\, (i_2,j_2,k)\in  B$. The LCP conditions for $\widetilde\varphi(t)$ are exactly the same that the LCP conditions for $\varphi(t)$ interchanging the parameters $\beta_i$ for $\delta_i$. Therefore, considering the non-zero values of the parameter $\alpha$ of the solutions
of the Laplacian flow (see Theorem~\ref{sol-flujo}) it is easy to see that $\gamma\neq0$ in all the cases, hence we conclude
that $\varphi(t)$ is LCP if and only if $\widetilde\varphi(t)$  is so.

\noindent (ii) Let $\varphi(t)$ and $\widetilde\varphi(t)$ be two families of $\mathrm{G}_2$-structures~\eqref{eqn:expression_sol} whose defining parameters of the functions $f^s_i(t)$ and $\widetilde f^s_i(t)$ are related by~\eqref{relation}.
Let $\varphi(t)$ be a solution of the Laplacian LCP-flow; we want to prove that $\widetilde\psi(t)=\widetilde\ast_t\widetilde\varphi(t)$ is
a solution of the corresponding coflow. That $\tilde\psi(t)$ is solution of the coflow is equivalent to:
\begin{equation*}
  (\widetilde f^s_7(t))^2\tilde \Delta_{lmno}= \gamma m (\delta_l+\delta_m+\delta_n+\delta_o)\qquad \text{ for any }(l,m,n,o)\in \mathcal{K}\cup\{(2,4,6,7)\},
  \end{equation*}
 where we have used the same ideas as in the proof of Theorem~\ref{sol-flujo}, that is, the functions $\widetilde f^s_i(t)$ are of potential type and $\delta_7=\frac12$.

Firstly observe that, as the Hodge star operator commutes with the Laplacian operator, we have that
$\Delta_t\ast_t\varphi(t)=\ast_t\Delta_t\varphi(t)$, hence, the coefficients of $\Delta_t\varphi(t)$ and $\Delta_t\psi(t)$ appearing in the linear combinations~\eqref{eq:delta3ijk} and \eqref{eq:delta4ijk}
 are related by:
\begin{equation*}
\Delta_{lmno}=\Delta_{ijk},\qquad\text{ for any }(i,j,k)\in A\cup B \text{ and } (l,m,n,o)=\widehat{(i,j,k)}.
\end{equation*}
This fact together with  the non-dependence of the $(f^s_7(t))^2\Delta_{ijk}$ with respect to the specific chosen functions noticed in
Remark~\ref{remark-coefs} yields:
\begin{equation*}
  (\widetilde f^s_7(t))^2\tilde\Delta_{lmno}=(\widetilde f^s_7(t))^2\tilde\Delta_{ijk}= (f^s_7(t))^2\Delta_{ijk},\qquad\text{ for any }(i,j,k)\in A\cup B \text{ and } (l,m,n,o)=\widehat{(i,j,k)}.
\end{equation*}
Now, since $\varphi(t)$ is solution of the flow then we have:
\begin{equation*}
(f^s_7(t))^2\Delta_{ijk}=- \alpha m (\beta_i+\beta_j+\beta_k)\qquad \text{ for any }(i,j,k)\in A\cup B.
  \end{equation*}
Therefore, bearing in mind~\eqref{relacion-importante} the following sequence of identities hold:

$$(\widetilde f^s_7(t))^2\widetilde{\Delta}_{lmno} = (f^s_7(t))^2\Delta_{ijk}
 =-\alpha m (\beta_i+\beta_j+\beta_k)= \gamma m (\delta_l+\delta_m+\delta_n+\delta_o),$$
for every $(l,m,n,o)\in\mathcal{K}\cup\{(2,4,6,7)\}$, that is, $\widetilde\psi(t)$ is a solution of the coflow.

The converse of the statement is basically the same and we omit the proof.
\end{proof}

As a consequence of the previous theorem, for every Lie group $S_s$ we provide in the following corollary  an explicit solution of the LCP-coflow based on the defining parameters of the solutions of the flow contained in Table~\ref{parametros}.

\begin{corollary}\label{cor:coflow}
Let $S_s$ be a solvable Lie group with underlying Lie algebra $\frak{cp}_s^m$. The family of $\mathrm{G}_2$-structures given below is solution for the Laplacian coflow:
\begin{itemize}
  \item $\frak{cp}_1^m$:  For $t\in(-\frac{1}{6 m^2},\,\infty)$,
\begin{equation*}
  \varphi(t)=(1+6m^2t)^{\frac{7}{6}}\left(e^{127}+e^{347}+e^{567}\right)+
  (1+6m^2t)\left(e^{135}-e^{146}-e^{236}-e^{245}\right).
\end{equation*}
\medskip
  \item $\frak{cp}_2^m$:  For $t\in(-\frac{3}{16 m^2},\,\infty)$,
\begin{equation*}
  \varphi(t)=(1+\frac{16}{3}m^2t)^{\frac{17}{16}}\left(e^{127}-e^{236}\right)+
  (1+\frac{16}{3}m^2t)^{\frac{19}{16}}\left(e^{347}+e^{567}\right)+
  (1+\frac{16}{3}m^2t)^{\frac{15}{16}}\left(e^{135}-e^{146}-e^{245}\right).
\end{equation*}
\medskip
\item $\frak{cp}_3^m$:  For $t\in(-\frac{1}{5 m^2},\,\infty)$,
\begin{equation*}
\varphi(t)=(1+5m^2t)(e^{127}-e^{236}-e^{245})+(1+5m^2t)^{\frac{6}{5}}(e^{347}+e^{567})+(1+5m^2t)^{\frac{9}{10}}(e^{135}-e^{146}).
\end{equation*}
\medskip
\item $\frak{cp}_4^m$: For $t\in(-\frac{5}{24 m^2},\,\infty)$,
\begin{equation*}
\varphi(t)=(1+\frac{24}{5}m^2t)^{\frac{23}{24}}(e^{127}-e^{146}-e^{236}-e^{245})+(1+\frac{24}{5}m^2t)^{\frac{29}{24}}(e^{347}+e^{567})+
(1+\frac{24}{5}m^2t)^{\frac{7}{8}}e^{135}.
\end{equation*}
\medskip
\item $\frak{cp}_5^m$:  For $t\in(-\frac{1}{5 m^2},\,\infty)$,
\begin{equation*}
\varphi(t)=(1+5m^2t)(e^{127}-e^{146}-e^{245})+(1+5m^2t)^{\frac{6}{5}}(e^{347}+e^{567})+
(1+5m^2t)^{\frac{9}{10}}(e^{135}-e^{236}).
\end{equation*}
\medskip
\item $\frak{cp}_6^m$:  For $t\in(-\frac{14}{3 m^2},\,\infty)$,
\begin{equation*}
\varphi(t)=(1+\frac{3}{14}m^2t)^{\frac{13}{14}}(e^{127}+e^{135}-e^{146}-e^{236}-e^{245})+(1+\frac{3}{14}m^2t)^{\frac{17}{14}}(e^{347}+e^{567}).
\end{equation*}
\medskip
\item $\frak{cp}_7^m$:  For $t\in(-\frac{5}{24 m^2},\,\infty)$,
\begin{equation*}
\varphi(t)=(1+\frac{24}{5}m^2t)^{\frac{9}{8}}(e^{127}+e^{347}+e^{567}-e^{236})+(1+\frac{24}{5}m^2t)^{\frac{7}{8}}(e^{135}-e^{146}-e^{245}).
\end{equation*}
\end{itemize}
\end{corollary}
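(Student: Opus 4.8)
The statement is a corollary of Theorem~\ref{Thm:flujo-coflujo}, so the plan is simply to feed that theorem the solutions of the Laplacian LCP-flow already produced: the family of Theorem~\ref{Thm-cp1} for $S_1$ (for which $\alpha=4$, $\beta_i=\tfrac34$ for $1\le i\le6$ and $\beta_7=\tfrac12$) and, for $S_2,\dots,S_7$, the families of Theorem~\ref{sol-flujo} whose defining parameters $\alpha$ and $(\beta_1,\dots,\beta_7)$ are collected in Table~\ref{parametros}; note that $\beta_7=\tfrac12$ in every case, which is precisely the hypothesis of Theorem~\ref{Thm:flujo-coflujo}.

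For each $s$ I would then carry out the following steps. First compute $\sigma_s:=\sum_{i=1}^7\beta_i$ from Table~\ref{parametros} (resp.\ from Theorem~\ref{Thm-cp1} for $S_1$) and observe that $\sigma_s>2$ and $\alpha>0$ in all seven cases, so that the transformation~\eqref{relation} is non-degenerate and the resulting $\gamma=\alpha\,(2-\sigma_s)/2$ satisfies $\gamma\neq0$, in fact $\gamma<0$. Next apply~\eqref{relation} to obtain the coflow parameters $\gamma$ and $(\delta_1,\dots,\delta_7)$ with $\delta_7=\tfrac12$, and form the family $\widetilde\varphi(t)=\sum_{(i,j,k)\in A\cup B}\varepsilon(i,j,k)(1-\gamma m^2t)^{\delta_i+\delta_j+\delta_k}e^{ijk}$. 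By Theorem~\ref{Thm:flujo-coflujo}(i), $\widetilde\varphi(t)$ is LCP because the corresponding flow family $\varphi(t)$ is; by Theorem~\ref{Thm:flujo-coflujo}(ii), $\widetilde\psi(t)=\widetilde\ast_t\widetilde\varphi(t)$ solves the Laplacian LCP-coflow~\eqref{LCP-coflow-eq}. Since $\widetilde f^s_i(0)=1$ for all $i$ we also have $\widetilde\varphi(0)=\varphi_0$, so the initial condition is met and $\widetilde\varphi(t)$ is the claimed solution.

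What remains is pure bookkeeping: evaluating the exponents $\delta_i+\delta_j+\delta_k$ for the seven triples in $A\cup B$ and determining the maximal interval of existence. For the interval, since $\gamma<0$ the function $t\mapsto 1-\gamma m^2t$ is positive precisely on $\big(1/(\gamma m^2),\infty\big)$, which after substituting the value of $\gamma$ in each case yields the half-lines in the statement (for instance $\gamma=-6$ for $S_1$ gives $t\in(-1/(6m^2),\infty)$). The exponent computation is routine and case-by-case; e.g.\ for $S_1$ one finds $\delta_i=\tfrac13$ for $1\le i\le6$ and $\delta_7=\tfrac12$, so the coefficient of $e^{127}$ is $(1+6m^2t)^{7/6}$ and that of $e^{135}$ is $(1+6m^2t)$, as claimed, and the other six algebras are treated identically. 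I do not expect any real obstacle here — the entire content is in Theorem~\ref{Thm:flujo-coflujo} — the only points requiring a little care being the verification $\sigma_s\neq2$ (so that~\eqref{relation} makes sense) and the sign of $\gamma$, which is opposite to that of $\alpha$ and therefore changes the interval of existence relative to the flow.
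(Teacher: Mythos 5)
Your proposal is correct and coincides with the paper's own derivation: the corollary is obtained, with no further argument, by substituting the flow parameters $\alpha,(\beta_1,\dots,\beta_7)$ of Theorem~\ref{Thm-cp1} and Table~\ref{parametros} into the transformation~\eqref{relation} of Theorem~\ref{Thm:flujo-coflujo}, exactly as you describe, and your sample computations for $S_1$ (namely $\gamma=-6$, $\delta_i=\tfrac13$) and the resulting existence intervals are right. One caveat that your bookkeeping would have surfaced had you completed all seven cases: for $S_6$ one gets $\gamma=\tfrac{8}{3}\cdot\tfrac{2-11/2}{2}=-\tfrac{14}{3}$, so the base should be $\bigl(1+\tfrac{14}{3}m^2t\bigr)$ on the interval $\bigl(-\tfrac{3}{14m^2},\infty\bigr)$; the fractions printed in the $\frak{cp}_6^m$ item of the corollary are inverted (the exponents $\tfrac{13}{14}$ and $\tfrac{17}{14}$ are nevertheless correct, as one checks from $\delta_1=\delta_2=\tfrac{3}{14}$, $\delta_3=\dots=\delta_6=\tfrac{5}{14}$, $\delta_7=\tfrac12$).
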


\begin{remark}
Direct computations show that, for each Lie algebra $\mathfrak{cp}^m_s$, the Ricci tensors of the metrics $\widetilde g_t$ induced by the solutions $\widetilde \varphi(t)$ to the coflow given in Corollary ~\ref{cor:coflow} are given by
 $$\text{Ric}(\widetilde g_t)=\left(\frac{1-\alpha\,m^2t}{1+\gamma\,m^2t}\right)\text{Ric}(g_t),$$
where $\text{Ric}(g_t)$ are the Ricci tensors of the metrics $g_t$ induced by the solutions $\varphi(t)$ to the flow given in Remark~\ref{RiccF}.  Thus, as in the flow case, only the solutions on $S_1$ are Einstein.

\end{remark}

\section*{Acknowledgments}
\noindent
The authors would like to thank Anna Fino and Luis Ugarte for useful comments on the subject.  This work has been partially supported by the projects  MTM2017-85649-P
(AEI/FEDER, UE), and E22-17R ``\'Algebra y Geometr\'ia" (Gobierno de Arag\'on/FEDER).
The third author would also like to thank the Fields Institute for its support during her stay in Toronto.


\newpage

\section*{Appendix}

\begin{table}[h!]
\begin{center}
\begin{tabular}{|c|l|}
\hline
Solvmanifold & \hspace{4.5cm} $R(g_t)$ \hspace{1cm} \\
\hline
&\\
&$R_{1367}(t)=R_{1637}(t)=\frac{-2}{3}C_2,\quad R_{1717}(t)=\frac{-16}{3}\,C_2,\quad R_{1736}(t)=\frac{4}{3}C_2,  $\\[5pt]
& $R_{1212}(t)=R_{1414}(t)=R_{1515}(t)=-4\,C_2,   $\\[5pt]
$S_2$& $R_{1313}(t)=R_{1616}(t)=R_{3636}(t)=\frac{-7}{3}\,C_2,\quad R_{3737}(t)=R_{6767}(t)=\frac{-4}{3}\,C_2,$\\[5pt]
&$R_{2323}(t)=R_{2626}(t)=R_{3434}(t)=R_{3535}(t)=R_{4646}(t)=R_{5656}(t)=-2\,C_2, $\\[5pt]
&$ R_{2424}(t)=R_{2525}(t)=R_{2727}(t)=R_{4545}(t)=R_{4747}(t)=R_{5757}(t)=-3\,C_2.$\\
&\\
\hline
&\\
&$R_{1212}(t) = \frac{-3}{2}\,C_3,\quad R_{1313}(t) = R_{1414}(t) = R_{1515}(t) = R_{1616}(t) = \frac{-17}{4}\,C_3, $\\[5pt]
&$R_{1367}(t) = R_{1457}(t) = -R_{1547}(t) = -R_{1637}(t)  = -R_{3716}(t) = -R_{4715}(t) =$\\[5pt]
&$R_{5714}(t) = R_{6713}(t) = \frac{-3}{4}\,C_3,\quad R_{3645}(t) = R_{4536}(t) = \frac{-C_3}{2},$\\[5pt]
$S_3$&$R_{1717}(t) = -9\,C_3,\quad R_{1736}(t) = R_{1745}(t) = R_{3617}(t) = R_{4517}(t) =\frac{3}{2}\,C_3,$\\[5pt]
&$R_{2323}(t) =  R_{2424}(t) = R_{2525}(t) = R_{2626}(t) = R_{3636}(t) =R_{4545}(t) = -3\,C_3, $\\[5pt]
&$R_{3456}(t) =  -R_{3546}(t) = -R_{4635}(t) = R_{5634}(t) =  \frac{C_3}{4}, \quad R_{2727}(t) = -4\,C_3,$\\[5pt]
&$R_{3434}(t) = R_{3535}(t) = R_{3737}(t) = R_{4646}(t)  = R_{4747}(t) = R_{5656}(t) =$\\[5pt]
&$R_{5757}(t) = R_{6767}(t) = \frac{-9}{4}\,C_3.$\\[5pt]
\hline
&\\
&$R_{1234}(t) = R_{1256}(t) = -R_{1423}(t) = -R_{1625}(t)  = -R_{2314}(t) = -R_{2516}(t) =$\\[5pt]
&$R_{3412}(t) = R_{3456}(t) = -R_{3546}(t) = -R_{4635}(t)  = R_{5612}(t) = R_{5634}(t) =\frac{C_4}{5},$\\[5pt]
&$R_{1367}(t) = -R_{1547}(t) = -R_{2467}(t) = R_{2647}(t)  = -R_{4715}(t) = R_{4726}(t) =$\\[5pt]
&$R_{6713}(t) = -R_{6724}(t) =\frac{-3}{5}\,C_4,\quad R_{1313}(t) = R_{1515}(t) =\frac{-27}{5}\,C_4,$\\[5pt]
&$R_{1414}(t) = R_{1616}(t) =-4\,C_4,\quad R_{1457}(t) = -R_{1637}(t) =\frac{-4}{5}\,C_4,$\\[5pt]
$S_4$&$R_{1736}(t) =R_{1745}(t) =R_{3617}(t) =R_{4517}(t) = \frac{7}{5}\,C_4,\quad R_{1212}(t) =\frac{-42}{5}\,C_4,$\\[5pt]
&$R_{2323}(t) = R_{2525}(t) =\frac{-24}{5}\,C_4,\quad R_{2424}(t) = R_{2626}(t) =\frac{-17}{5}\,C_4,$\\[5pt]
&$R_{2746}(t) = R_{4627}(t) =\frac{-6}{5}\,C_4,\quad R_{3434}(t) = R_{4646}(t) =R_{5656}(t) =\frac{-12}{5}\,C_4,$\\[5pt]
&$R_{3535}(t) = R_{3737}(t) = R_{5757}(t) =\frac{-16}{5}\,C_4,\quad R_{3636}(t) = R_{4545}(t) =-3\,C_4,$\\[5pt]
&$R_{3645}(t) = R_{4536}(t) = \frac{-2}{5}\,C_4,\quad R_{3716}(t) = -R_{5714}(t) =\frac45\,C_4,$\\[5pt]
&$R_{4747}(t) = R_{6767}(t) = \frac{-9}{5}\,C_4,\quad R_{1717}(t) = \frac{-49}{5}\,C_4.$\\[5pt]
\hline
\end{tabular}
\end{center}
\caption{Non-vanishing coefficients of the curvature of the metric $g_t$ induced by the solutions of the LCP flow expresssed in the
adapted basis $\{x_i\}_{i=1}^7$.}\label{Tabla_Curvatura}
\end{table}

\begin{table}[h!]
\begin{center}
\begin{tabular}{|c|l|}
\hline
\hspace{1cm}  \hspace{1cm} & \hspace{1cm} $R(g_t)$ \hspace{1cm} \\
\hline
&\\
&$R_{1212}(t) = R_{1717}(t) = R_{2727}(t) = \frac{-25}{5}\,C_3,\quad R_{1313}(t) = R_{2323}(t) = -5\,C_3,$\\[5pt]
&$R_{1256}(t) = -R_{1625}(t) = -R_{2516}(t) = R_{5612}(t) = \frac{C_3}{4},$\\[5pt]
&$R_{1414}(t) = R_{2424}(t) = R_{4545}(t) =  R_{4646}(t) = R_{5656}(t) = R_{5757}(t) = $\\[5pt]
&$R_{6767}(t) = \frac{-9}{4}\,C_3,\quad R_{1515}(t) = R_{2626}(t) = \frac{-7}{2}\,C_3,$\\[5pt]
$S_5$&$R_{1457}(t) = -R_{2467}(t) = R_{5714}(t) =  -R_{6724}(t) =  \frac{-3}{4}\,C_3,\quad R_{3434} = -2\,C_3,$\\[5pt]
&$R_{1547}(t) = -R_{2647}(t) = R_{4715}(t) =  -R_{4726}(t) =  \frac{C_3}{2},\quad R_{3737} = -4\,C_3,$\\[5pt]
&$R_{1745}(t) = -R_{2746}(t) = R_{4517}(t) =  -R_{4627}(t) =  \frac{5}{4}\,C_3,\quad R_{4747} = -\,C_3,$\\[5pt]
&$R_{1616}(t) = R_{2525}(t) = \frac{-15}{4}\,C_3,\quad R_{3535}(t) = R_{3636}(t) = -3\,C_3.$\\[5pt]
\hline
&\\
&$R_{1234}(t) = R_{1256}(t) = R_{3412}(t) =  R_{3456}(t) = R_{5612}(t) = R_{5634}(t) = \frac{C_2}{6},$\\[5pt]
&$R_{1313}(t) = R_{1414}(t) = R_{1515}(t) =  R_{1616}(t) = R_{2323}(t) = R_{2424}(t) = R_{2525}(t) = $\\[5pt]
&$R_{2626}(t) = \frac{-31}{12}\,C_2,\quad R_{1212}(t) = R_{1717}(t) = R_{2727}(t) =\frac{-16}{3}\,C_2,$\\[5pt]
&$R_{1324}(t) = -R_{1423}(t) = R_{1526}(t) =  -R_{1625}(t) = -R_{2314}(t) = R_{2413}(t) = R_{2615}(t) = $\\[5pt]
&$R_{3546}(t) = -R_{3645}(t) = -R_{4536}(t) = R_{4635}(t) = \frac{C_2}{12}, $\\[5pt]
$S_6$&$R_{1367}(t) = R_{1457}(t) = -R_{1547}(t) =  -R_{1637}(t) = R_{2357}(t) =  -R_{2467}(t) = -R_{2537}(t) =$\\[5pt]
&$ R_{2647}(t) =-R_{3716}(t) = -R_{3725}(t) = -R_{4715}(t) =  R_{4726}(t) = R_{5714}(t) =R_{5723}(t) = $\\[5pt]
&$ R_{6713}(t) =-R_{6724}(t) = \frac{-C_2}{3},$\\[5pt]
&$R_{1736}(t) = R_{1745}(t) =  R_{2735}(t) =-R_{2746}(t) = R_{3527}(t) = R_{3617}(t) =  R_{4517}(t) =$\\[5pt]
&$ -R_{4627}(t)  = \frac{2}{3}\,C_2,\quad R_{3535}(t) = R_{3636}(t) =  R_{4545}(t) =R_{4646}(t) =\frac{-19}{12}\,C_2,$\\[5pt]
&$R_{3434}(t) = R_{3737}(t) =  R_{4747}(t) =R_{5656}(t) =R_{5757}(t) =R_{6767}(t) =\frac{-4}{3}\,C_2,$\\[5pt]
\hline
&\\
&$R_{1234}(t) = R_{1256}(t) =  -R_{1423}(t) =R_{1526}(t) = -R_{2314}(t) = R_{2615}(t) =  R_{3412}(t) =$\\[5pt]
&$ R_{3456}(t) =-R_{3645}(t) = -R_{4536}(t) =  R_{5612}(t) =R_{5634}(t) =\frac{-18}{5}\,C_4,$\\[5pt]
&$ R_{1313}(t) =R_{1616}(t) = R_{2424}(t) =  R_{2525}(t) =R_{3535}(t) = R_{4646}(t) =\frac{-17}{5}\,C_4,$\\[5pt]
&$R_{1367}(t) = -R_{1637}(t) =  -R_{2467}(t) =-R_{2537}(t) = R_{2735}(t) = -R_{2746}(t) =  R_{3527}(t) =$\\[5pt]
$S_7$&$-R_{3716}(t) =-R_{3725}(t) = -R_{4627}(t) =  R_{6713}(t) =-R_{6724}(t) =\frac{-3}{5}\,C_4,$\\[5pt]
&$ R_{1414}(t) =R_{1515}(t) = R_{1717}(t) =  R_{4545}(t) =R_{4747}(t) = R_{5757}(t) =\frac{-36}{5}\,C_4,$\\[5pt]
&$ R_{1736}(t) =R_{2357}(t) = R_{2647}(t) =  R_{3617}(t) =R_{4726}(t) = R_{5723}(t) =\frac{6}{5}\,C_4,$\\[5pt]
&$R_{2323}(t) =R_{2626}(t) = R_{3636}(t) =  \frac{-12}{5}\,C_4,\quad R_{2727}(t) =R_{3737}(t) = R_{6767}(t) =  \frac{-9}{5}\,C_4.$\\[5pt]
\hline
\end{tabular}
\end{center}
\caption{Continuation of Table~\ref{Tabla_Curvatura}.}\label{Table3}
\end{table}

\bigskip


\bigskip


\end{document}